\documentclass[12pt]{article}
\usepackage[utf8]{inputenc}
\usepackage[english,russian]{babel}
\usepackage{graphicx}
\usepackage{amsmath,amssymb,latexsym,amsfonts}

\usepackage[top=45pt, bottom=30mm, left=45pt, right=30mm]{geometry}


\newtheorem{theorem}{Theorem}

\newtheorem{lem}[theorem]{Lemma}
\newtheorem{prop}[theorem]{Proposition}
\newtheorem{cor}[theorem]{Corollary}

\newtheorem{rem}{Замечание}

\newenvironment{proof}{\textsc{Proof}.} 


\numberwithin{equation}{section}
\numberwithin{theorem}{section}



\newcommand{\1}{\hspace{1pt}}


\newcommand{\su}{\subset}

\newcommand{\mG}{\mathbb{G}}



\newcommand{\mg}{\mathbf{g}}

\newcommand{\de}{\delta}

\newcommand{\mmp}{ \mathbf{p} }

\newcommand{\mn}{ \mathbf{n} }

\newcommand{\mq}{ \mathbf{q} }

\newcommand{\mN}{ \mathbf{N} }
\newcommand{\mbN}{ \mathbb{N} }


\begin{document} 

\begin{center}
{\bf On multiple null-series in the Walsh system, M- and U-sets}
\end{center}

\begin{center}
A.~D.~Kazakova, M.~G.~Plotnikov
\end{center}

\begin{center}

A family of $M$-sets and null-series for the d-dimensional Walsh system 
are constructed if we consider convergence over rectangles, cubes, or iterated convergence. Non-empty portions of the constructed $M$-sets are also $M$-sets. The question of the rate of convergence to zero of the coefficients of zero-series that realize the constructed $M$-sets is studied, and it is shown how to modify the construction of the latter to turn them into $U$-sets.
\end{center}

Bibliography: 46 titles.

Keywords: null-series,  Walsh system, multiple Walsh series, $M$-sets, $U$-sets

\section*{Introduction}

In 1916, D.E.~Men'shov  \cite{men'shov} constructed the first nontrivial trigonometric series 
that converges to zero almost everywhere.
Such series are called {\it null-series}, 
and the set outside of which a null series converges to zero is an $M$-set. 
More precisely, a set $A$ is called an {\it $M$-set} 
for some system of functions, if there exists a series 
with respect to this system that realizes it, 
i.e. a series converging to zero outside $A$ whose coefficients are not all zero.
A set that is not an $M$-set is called a {\it $U$-set} 
(or alternatively, a {\it set of uniqueness}).

Numerous studies have demonstrated that determining whether a given set belongs 
to the $U$-set or $M$-set class for the trigonometric system is a highly nontrivial problem. This depends not only on the metric and topological structure of the set, but also on its arithmetic properties.
Thus, to determine whether a symmetric closed set $F_\zeta$ with constant ratio $\zeta$ (a Cantor-type set) is a $U$-set for the trigonometric system, one needs to investigate the arithmetic properties of the parameter $\zeta$ 
(the Salem--Zygmund--Bari--Pyatetsky-Shapiro theorem \cite{zygmund-2002}, \cite{bari-1961}). 
A constructive criterion for belonging to the family of $U$-sets 
cannot be found even 
for the class of closed sets.
See \cite{kechris-louveau-1987}. 

For some recent results concerning uniqueness problems,
including for other systems of functions,
see \cite{kholshchevnikova-2019, kozma-olevskii-2020, gevorkyan-2020, plotnikov-2020,  
skvortsov-2021, wronicz-2021, lukomskii-2021, plotnikov-2022, 
gevorkyan-2023, gevorkyan-2024, keryan-khachatryan-2024, kazakova-plotnikov-2025-1, kazakova-plotnikov-2025-2}.   

This paper studies problems related to null-series, $M$-sets, and $U$-sets for the multidimensional Walsh system.
In the one-dimensional case, the existence of $M$-sets of zero measure for the Walsh system was established by 
A.A.~Shneider, J.E.~Coury, and F.~Schipp \cite{shneider-1949, schipp-1969, coury-1970} 
while the first construction of a perfect
$M$-set of zero measure for the Walsh system was given by V.A.~Skvortsov \cite{skvortsov-1976}. 
The papers \cite{skvortsov-1977-1, gevorkian-1988} are devoted to the question of
the behavior of the coefficients of zero-series. 
In particular, G.G.~Gevorkyan proved \cite{gevorkian-1988} 
that for every positive monotonically tending to zero sequence not in $l_2$, there exists a Walsh null-series
whose coefficients are majorized by this sequence. 
In~\cite{skvortsov-1977-2}, V.A.~Skvortsov constructed an example of a perfect $M$-set with zero Hausdorff $h$-measure for every $h > 0$. Thus, a set that is ``thin'' in this specific sense turns out to be an $M$-set. 

Later results established the existence of null-series for a more general class of multiplicative periodic orthonormal Vilenkin systems. Each such system is defined by a sequence of integers $(p_n \ge 2)$. (V.A.~Skvortsov~\cite{skvortsov-1979} --- the case $\sup p_n < +\infty$, 
I.I.~Tuzikova~\cite{tuzikova-1985} --- the case $\lim\limits_{n \to \infty} 
\inf\limits_{m \ge n} p_m < +\infty$, 
N.A.~Bokaev and M.A.~Nurkhanov~\cite{bokaev-nurhanov-1993} --- the general case).

Given one-dimensional null-series, multidimensional null-series can be constructed \cite{kholshchevnikova-2013}:  
if $\sum\limits_n a_n\phi_n(x_1) $ and $\sum\limits_m b_m\phi_m(x_2)$ are 1-dim null-series, 
$F_1$ and $F_2$ are their corresponding $M$-sets, 
then the double series $\sum\limits_{n,m} a_n b_m \phi_n(x_1) \phi_m(x_2)$ 
is a null-series with respect to the function system $\{ \phi_n (x_1) \phi_m (x_2) \}$ 
under convergence over rectangles
or over cubes, 
and the set
\begin{equation} \label{F}
F = (F_1 \times [0, 1]) \cup ([0, 1] \times F_2)
\end{equation} 
is an $M$-set. 

For multidimensional Walsh series, broad classes of uniqueness sets under rectangular convergence were obtained by S.F.~Lukomsky, L.D.~Gogoladze, and T.A.~Zherebyeva
\cite{lukomskii-89, gogoladze-2008, zherebeva-2009} 
(see also~\cite{skvortsov-tulone-2015}, where such classes are essentially contained), 
for cube and $\lambda$-convergence by M.G.~Plotnikov and S.F.~Lukomsky   
\cite{plotnikov-07-1, plotnikov-07-2, plotnikov-2010, plotnikov-2017, lukomskii-2021}. 
In another work \cite{Lukomskii-1992} S.~F.~Lukomskii, 
using the construction of V.~A.~Skvortsov \cite{skvortsov-1976},
proved the existence of a set that is a $U$-set under rectangular convergence, 
but an $M$-set under cube convergence and $\lambda$-one.
In all works concerning cube convergence and $\lambda$-convergence, the Cartesian power $d$ 
of the Cantor dyadic group $\mathbb{G}$ was considered as the domain of $d$-dim Walsh functions.
If we consider the $d$-dim Walsh system on the unit cube $[0,1]^d$ 
rather than on the set $\mathbb{G}^d$, then it is not even known whether $\emptyset$ is a $U$-set. 
An analogous question remains open for the the $d$-dim trigonometric system as well. 
For multiple Walsh and trigonometric series (under both rectangular and cubic convergence), the following interesting question remains open: are all sets of positive measure necessarily $M$-sets?
This question has been repeatedly posed in several works, for example in \cite{ash-freiling-rinne-1993, 
kholshchevnikova-2002}. 

In \S~\ref{S:M-Set} of this paper, 
we construct a new class of $M$-sets for the $d$-dim Walsh system along with their corresponding null-series, 
considering not only rectangular and cubic convergence but also iterated one.
 
In \S~\ref{S:Main-results} it is proved 
(Theorems~\ref{T:Main-1} and \ref{T:Main-1-1}) 
that the constructed $M$-sets and null-series 
are in fact such.

Remarkably, the $M$-sets \eqref{F} have uncountably many sections along each coordinate axis with full one-dimensional Lebesgue measure (when considering Walsh functions as $\{\phi_n\}$). 
The $M$-sets constructed in \S~\ref{S:M-Set} for the $d$-dimensional Walsh system not only themselves have $d$-dimensional zero measure, but also every their section by a $k$-dimensional plane parallel to the coordinate axes 
has $k$-dimensional zero measure whenever $k = 1, \ldots, d-1$.

In Theorem~\ref{T:Portion-2} it is shown that non-empty portions of the constructed $M$-sets will also be $M$-sets, 
and the zero series realizing them are explicitly constructed. 
This yields a positive answer to the question posed in  \cite{kholshchevnikova-2013} by N.\,N.~Kholshchevnikova  
regarding the existence of $M$-sets contained, for instance, within the cube $[0,1/2]^d$.
For rectangular convergence, the part of Theorem~\ref{T:Portion-2} concerning $M$-sets 
can alternatively be proved using Lemmas 1 and 3 from another work of N.\,N.~Kholshchevnikova~\cite{kholshchevnikova-2002}.

Theorem~\ref{T:Main-2-2-2} establishes, in the spirit of 
G.\,G.~Gevorkyan's work \cite{gevorkian-1988} 
for the 1-dim Walsh system, 
that the coefficients of the constructed null-series cannot be significantly 
reduced while still realizing the original $M$-set.

Interestingly, the structure of the constructed $M$-sets becomes 
highly transparent when described using the ``graphs'' of multidimensional Walsh functions $W_{\mathbf{n}}$. 
Such sets $F$ arise as intersections of ``layers'' $F_s$, 
where each layer is obtained by covering the domain of the Walsh functions with several graphs.
Because of the use of ``graphs'' of different functions,  
the set $F$ is in some sense  
strongly non-symmetric.
For details, see \S~\ref{S:M-Set}.
As soon as we make the set $F$ more symmetric 
by covering the domain with ``graphs'' of a single Walsh function, 
we immediately obtain a $U$-set (Theorem~\ref{T:U-2} 
and Corollary~\ref{Cor:U-2}).

The structure of the constructed multiple Walsh null-series also becomes highly transparent when, instead of series, 
one considers finitely additive set functions of dyadic type (so-called \emph{quasi-measures}) 
supported on the constructed $M$-sets.
This approach is motivated by the fact that the sets of multiple Walsh series and quasi-measures are isomorphic as linear spaces, where each such series is the Walsh-Fourier series of the quasi-measure it generates.
For details, see, for example, \cite{MP-EMJ-2019}.   
 
The rest of the paper is organized as follows.
\S~\ref{S:Prelim-1} contains the main definitions and general auxiliary results. Technical lemmas about the sets and Walsh series constructed in Section~\ref{S:M-Set} are presented in Section~\ref{S:Prelim-3}.

In our paper, we use some one-dimensional ideas from \cite{skvortsov-1976, yoneda-1982} 
as well as $d$-dim techniques from \cite{plotnikov-07-1, plotnikov-07-2, plotnikov-2010}.


\section{Preliminaries} 
\label{S:Prelim-1}

\subsection{Notations} 

We write $:=$ to denote equality by definition.

$\# A$ denotes the cardinality of (a finite set) $A$. 

$\mathbb{C}$ denotes the set of complex numbers,
$\mathbb{N}$ the set of positive integers; 
$\mathbb{N}_0 := \mathbb{N} \cup \{ 0 \}$. 

\[ 
\mathrm{I} (A) 
:= 
\begin{cases} 
1  &\text{if statement $A$ is true}, 
\\ 
0  &\text{if statement $A$ is false}, 
\end{cases}
\] 

$\delta_m^p := \mathrm{I} ( m=p )$ is the \emph{Kronecker delta}.  

We denote by $n_k$ the \emph{binary coefficients} of $n \in \mathbb{N}_0$, 
obtained from its \emph{binary expansion}  
$n = \sum_{k=0}^\infty n_k 2^k$ where $n_k = 0 \vee 1$.

We use the notation $a:b$ for the set $\{ a, a+1, \ldots, b-1, b \}$. 

Multiplication of a vector by a scalar is understood in the standard sense.

Throughout this paper, let $d \ge 2$ be a fixed natural number. 

We write $\mathbf{0}$ for the $d$-dimensional zero vector $(0, \ldots, 0)$, 
$\mathbf{1}$ denotes the $d$-dimensional all-ones vector $(1, \ldots, 1)$,  
$\mathbf{g} = ( g^1, \ldots, g^d )$, $\mathbf{h} = ( h^1, \ldots, h^d )$.
The notation $\mathbf{g} < \mathbf{h}$ ($\mathbf{g} \le \mathbf{h}$) means that $g^j < h^j$ ($g^j \le h^j$) 
for all $j \in 1:d$.  

 $B_k := 
\big\{ 
    \mn \in \mN^d 
    \colon 
    2^k \mathbf{1} \le \mn < 2^{k + 1} \mathbf{1}   
\big\}$. 


\subsection{Basic Definitions and Preliminary Facts}


\subsubsection{} 

The dyadic group $\mathbb{G} = \mathbb{G}_2$ is defined as the direct sum 
of countably many copies of the cyclic group $\mathbb{Z}_2$ (each endowed with the discrete topology) equipped with the product topology (also called the Tikhonov topology).
The zero element of the group $\mathbb{G}$ and inverse elements are defined in the obvious way. 
The elements of $\mathbb{G}$ can be conveniently represented 
as sums of convergent in the topology of $\mathbb{G}$ formal series
\begin{equation}
\label{Eq:PG-El}
\bigoplus\limits_{ k = 0 }^\infty 
g_k e_k, 
\quad
g_k \in \{ 0, 1 \},
\end{equation} 
or as the series themselves.
Here, $e_k$ are the $k$-th generators of the group $\mathbb{G}$, satisfying $2e_k = 0$, 
and the group operation $\oplus$ is applied componentwise to the elements in \eqref{Eq:PG-El}.

For each $d \in \mathbb{N}$, 
the set $\mathbb{G}^d = (\mathbb{G}_2)^d$ is a topological abelian group with the addition operation
\[
\mathbf{g} \oplus \mathbf{h} := (g^1 \oplus h^1, \ldots, g^d \oplus h^d),
\]
where the zero element and additive inverses are defined in the natural way. 
We use the notation $\oplus$ for the group operation on both $\mathbb{G}$ and $\mathbb{G}^d$, 
as this convention will not lead to any confusion.
A base for the topology of $\mathbb{G}$ is formed by the cosets
of subgroups 
$
\left\{ \bigoplus\nolimits_{t=k+1}^\infty g_t e_t \right\}$,
which are called \emph{dyadic intervals} of rank $k$ and are often numbered as follows:
\[ 
\Delta^{(k)}_m 
:= 
\left\{ 
    \bigoplus\nolimits_{ t = 0 }^\infty 
    g_t e_t
    \colon 
    g_t = m_{k - 1 - t}, \;\; t \in [ 0, k )  
\right\}, 
\quad 
k \in \mbN_0, 
\quad 
m \in 0 : 2^k - 1, 
\] 
$m_t$ is the binary coefficients of the number $m$. 

A base for the topology of $\mathbb{G}^d$ is formed by $d$-dim  
\emph{dyadic cubes} (of rank $k$)
\begin{equation}
\label{Eq:P-adic-Cube}
\Delta^{(k)}_{\mathbf{m}} 
:= 
\Delta^{(k)}_{m^1}  
\times 
\ldots 
\times 
\Delta^{(k)}_{m^d}, 
\qquad 
k \in \mathbf{N}_0, 
\quad
\mathbf{m} \in ( 0 : 2^k - 1 )^d, 
\end{equation} 
each of which is clopen in the topology of $\mathbb{G}^d$ 
and represents a certain coset of the subgroup $\Delta^{(k)}_{\mathbf{0}}$. 
We say that the cube \eqref{Eq:P-adic-Cube} is of rank $k$.  
We sometimes write simply $\Delta^{(k)}$ to denote an arbitrary dyadic cube of rank $k$. 
Every dyadic cube of rank $k$ is partitioned into $2^d$ 
({\it adjacent}) dyadic cubes of rank $k+1$:
\[ 
\Delta^{(k)}_{\mathbf{m}} 
= 
\bigsqcup\limits_{ \boldsymbol{\sigma} \in \{ 0, 1 \}^d } 
\Delta^{(k+1)}_{ 2 \mathbf{m} + \boldsymbol{\sigma} }. 
\]

The mapping $F$ that associates to an element \eqref{Eq:PG-El} the series sum 
$\sum_{k=0}^\infty g_k 2^{-k-1}$ is a bijection modulo a countable set, 
mapping $\mathbb{G}$ onto $[0,1]$  
and sending $\Delta^{(k)}_m$ to the closed intervals 
$\left[ m 2^{-k}, (m+1)2^{-k} \right] \subset [0,1]$ (see \cite{Schipp-and-Co}).

The coordinate-wise extension $(g^1,\ldots,g^d) \stackrel{F}{\mapsto} (F(g^1),\ldots,F(g^d))$ 
is a bijection modulo a null set between $\mathbb{G}^d$ and $[0,1]^d$, 
mapping the dyadic cubes \eqref{Eq:P-adic-Cube} 
to the cubes $\mathop{\times}\limits_{l=1}^d \left[m^l 2^{-k}, (m^l+1)2^{-k}\right]$.


By the \emph{measure} $\mu$ on the group $\mG^d$ we mean the normalized ($\mu ( \mG^d ) = 1$) Haar measure, 
which is defined on all Borel subsets of the group $\mG^d$ and invariant 
with respect to shifts and transformations converting $H$ to $H^{-1}$. 
We have  $\mu ( \Delta^{(k)}_{\mathbf{m}} ) = 2^{-kd}$. 
For the one-dimensional case, see  \cite{Schipp-and-Co}. 


\subsubsection{}

On the group $\mathbb{G}$, the \emph{Walsh functions} in the Paley numeration  
are defined as
$W_n (g) = \prod_{k=0}^{\infty}
( - 1 )^{g_k n_k}$, where $n \in \mathbb{N}_0$ and $g$ is an element of $\mathbb{G}$ of the form \eqref{Eq:PG-El}.  
For $n < 2^k$ the Walsh function $W_n$ takes a constant value  
\begin{equation}
\label{Eq:WF-DI-1} 
=: W_n ( \Delta^{(k)}_m ) 
= \sum\limits_{j=0}^{k-1} n_j m_{k-1-j}  
\end{equation} 
on $\Delta^{(k)}_m$. 
\emph{$d$-\hspace{0pt}dim Walsh functions} 
$W_{\mathbf{n}}$
\begin{equation}
\label{Eq:VC-F}
W_{\mathbf{n}} ( \mathbf{g} ) 
:= 
\prod\limits_{l=1}^d
W_{n^l} (g^l), 
\quad 
\mathbf{n} \in ( \mathbb{N}_0 )^d, 
\quad 
\mathbf{g} \in \mG^d, 
\end{equation} 
form an orthonormal system in $L^2 ( \mG^d, \mu )$. We have  
\[ 
W_{ \mathbf{n} } ( \mathbf{g} ) 
W_{ \mathbf{n} } ( \mathbf{h} ) 
= 
W_{\mathbf{n}} ( \mathbf{g} \oplus \mathbf{h} ) 
\quad 
\text{for all $\mathbf{n}$, $\mathbf{g}$, $\mathbf{h}$}. 
\] 


{\it $d$-dimensional Walsh series} on $\mG^d$ is defined by 
\begin{equation}
\label{Eq:WSer}
\sum_{ \mathbf{n} \in ( \mbN_0 )^d } 
a_{ \mathbf{n} } W_{ \mathbf{n} } ( \mathbf{g} ), 
\quad
a_{ \mathbf{n} } \in \mathbb{C},  
\end{equation} 
while its the $\mN$-th \emph{rectangular partial sums} at a point $\mathbf{g}$ is defined by 
\begin{equation}
\label{Eq:PartSums}
S_{\mN} ( \mathbf{g} )
:= 
\sum_{ \mathbf{n} < \mN }  
a_{ \mathbf{n} } 
W_{\mathbf{n}} (\mathbf{g}), 
\quad 
\mN \in \mbN^d.
\end{equation} 
Partial sums $S_{\mN}$ with indices $\mathbf{N} = N \mathbf{1}$  
are called \emph{cubic} and denoted simply by $S_N$.

For $\mathbf{n}$ and $\mathbf{N}$ satisfying $\mathbf{n} < 2^k \mathbf{1}$ and $\mathbf{N} - \mathbf{1} < 2^k \mathbf{1}$,  
each Walsh function $W_{\mathbf{n}}$ and partial sum $S_{\mathbf{N}}$  
are constant on $\Delta^{(k)}$, taking values  
$=: W_{\mathbf{n}}(\Delta^{(k)})$ and $=: S_{\mathbf{N}}(\Delta^{(k)})$, respectively.

A series \eqref{Eq:WSer} \emph{converges over rectangles} at a point $\mathbf{g}$ 
to a sum $S \in \mathbb{C}$ if 
\[ 
\lim S_{\mN} ( \mathbf{g} ) = S ,
\;\; \text{ $\min \{ N^1, \ldots, N^d \} \to \infty$}, 
\]
\emph{over cubes} if  $\lim\limits_{N \to \infty} S_N ( \mathbf{g} ) = S$, 
and $\lambda$-converges, $\lambda \ge 1$, if
\[
\lim S_{\mN} ( \mathbf{g} ) = S
\;\; \text{for $\min \{ N^1, \ldots, N^d \} \to \infty$
and $\max\limits_{j,k} N^j / N^k \le \lambda$}.
\]
For $\lambda > 1$, $\lambda$-convergence
is weaker than rectangle one and stronger than cube one.
If $(j_1, \ldots, j_d)$ is a permutation of the numbers $1, \ldots, d$, then the \emph{iterated convergence} at the point $\mathbf{g}$ to the value $S$ of the series \eqref{Eq:WSer}, corresponding to this permutation, means that
\begin{equation}
\label{Eq:WSer-Rep-Conv}
\sum_{ n^{j_1} \in \mbN_0 } 
\bigg( 
    \sum_{ n^{j_2} \in \mbN_0 } 
    \bigg( 
        \ldots 
        \bigg( 
            \sum_{ n^{j_d} \in \mbN_0 } 
            a_{ \mathbf{n} } W_{ \mathbf{n} } ( \mathbf{g} ) 
        \bigg)
    \bigg)    
\bigg) 
= S. 
\end{equation} 


\subsubsection{}

Let 
\[ 
D_N (g) 
:= 
\sum_{ n < N }  
W_n (g) 
\] 
be the $N$-th {\it Dirichlet kernel} for the Walsh system. It
is known (see, for example, \cite{golubov-efimov-skvortsov}, formulae (1.4.11) and (1.4.13)) that
\begin{equation} 
\label{Eq:WFS-2}
D_n = D_{2^k} + R_k D_m, 
\quad 
n = 2^k + m, 
\quad 
m \in 1 : 2^k, 
\end{equation} 
$R_k \equiv W_{2^k}$ are the {\it Rademacher functions}; 
\begin{equation} 
\label{Eq:WFS-3}
D_{2^k} (g) 
= 
\begin{cases} 
2^k     & \text{if $g \in \Delta_0^{(k)}$}, 
\\ 
0       & \text{otherwise}.  
\end{cases}  
\end{equation}

The $\mN$-th Dirichlet kernel $D_{\bf N}$ for the system of $d$-dim Walsh
functions is defined as follows:
\begin{equation} 
\label{Eq:WFS-11} 
D_{\mN} (\mathbf{g}) 
:= 
\sum_{ \mn < \mN }  
W_{\mn} (\mathbf{g}) 
= 
\prod\limits_{l=1}^d
D_{N^l} (g^l), 
\quad 
\mN \in \mbN^d.
\end{equation}  


\subsubsection{} 

Let $k \in \mathbb{N}_0$,  
$\mathbf{n}, \mathbf{m} < 2^k \mathbf{1}$. 
As previously noted, 
the Walsh function $W_{\mathbf{n}}$ takes a constant value on $\Delta^{(k)}_{\mathbf{m}}$,  
which we denote by  
$W^{(k)}_{\mathbf{n} \mathbf{m}}$ and  
which is equal to
$\prod_{j=1}^d W^{(k)}_{n^j m^j}$,   
where $W^{(k)}_{n, m} := W_n ( \Delta^{(k)}_m )$ are
the elements of the $k$-th Walsh matrix $W^{(k)}$ 
(see, for example, \cite[Section~1.3]{golubov-efimov-skvortsov}).
It is well known that $W^{(k)}$ is symmetric and
\begin{equation} 
\label{Eq:Walsh-Matrix-1}
W^{(k)} W^{(k)} = W^{(k)} ( W^{(k)} )^T = 2^k E_{2^k}, 
\end{equation}
$E_{2^k}$ is the unit matrix of the order $2^k$. 
From \eqref{Eq:Walsh-Matrix-1} we obtain the equality
\[ 
\sum\limits_{ m^{\prime} < 2^k }
W_{ m m^{\prime} }^{(k)}
W_{ p m^{\prime} }^{(k)} 
= 
2^k \delta_m^p,   
\] 
from which its $d$-dimensional analogue follows:
\begin{equation} 
\label{Eq:Walsh-Matrix-0} 
\begin{split}
\sum\limits_{ \mathbf{m}^{\prime} < 2^k \cdot \mathbf{1} }
& 
W_{ \mathbf{m} \mathbf{m}^{\prime} }^{(k)}
W_{ \mmp \mathbf{m}^{\prime} }^{(k)}  
= 
\sum\limits_{ ( m^{\prime} )^1 < 2^k }
\ldots 
\sum\limits_{ ( m^{\prime} )^d < 2^k }
\prod\limits_{l=1}^d
W^{(k)}_{ m^l ( m^{\prime} )^l }
\prod\limits_{l=1}^d
W^{(k)}_{ p^l ( m^{\prime} )^l }
\\ 
& 
= 
\sum\limits_{ ( m^{\prime} )^1 < 2^k }
W^{(k)}_{ m^1 ( m^{\prime} )^1 }
W^{(k)}_{ p^1 ( m^{\prime} )^1 }
\cdot 
\ldots 
\cdot  
\sum\limits_{ ( m^{\prime} )^d < 2^k }
W^{(k)}_{ m^d ( m^{\prime} )^d }
W^{(k)}_{ p^d ( m^{\prime} )^d }
\\ 
& 
= 
2^{kd} \delta_{\mathbf{m}}^{\mathbf{p}}.  
\end{split}
\end{equation}


\subsubsection{}

\emph{Quasi-measures} on the group $\mathbb{G}^d$ are 
finitely additive set functions  
$\tau \colon \mathcal{B} \to \mathbb{C}$,  
where $\mathcal{B}$ denotes the semiring  
consisting of $\emptyset$ and all dyadic cubes.
Every quasi-measure $\tau$ extends  
to the ring generated by $\mathcal{B}$;  
if $\tau$ is nonnegative,  
it admits an extension to a $\sigma$-additive measure  
on the $\sigma$-algebra of Borel subsets of $\mathbb{G}^d$.

It is easy to check that a set function 
$\tau \colon \mathcal{B} \to \mathbb{C}$ 
is a quasi-measure if and only if
\begin{equation} 
\label{Eq:QM-Char-Prop} 
\tau  
\big( \Delta^{(k)}_{\mathbf{m}} \big) 
= 
\sum\limits_{ \boldsymbol{\sigma} \in \{ 0, 1 \}^d } 
\tau  
\big( \Delta^{(k+1)}_{ 2 \mathbf{m} + \boldsymbol{\sigma} } \big) 
\;\; 
\text{for all admissible $k$ и $\mathbf{m}$}. 
\end{equation}

The set of all quasi-measures is isomorphic as linear space 
to the space of all series \eqref{Eq:WSer};  
the canonical isomorphism 
is given by the mapping that associates to each series \eqref{Eq:WSer}  
a quasi-measure $\tau$,
\begin{equation} 
\label{Eq:Canon-Iso}
\begin{split}
\tau \big( \Delta^{(k)} \big) 
:= 
& 
\sum\limits_{\mn \le 2^k \mathbf{1}} 
\int\limits_{ \Delta^{(k)} } 
a_{\mathbf{n}} W_{\mathbf{n}} ( \mathbf{g} ) \, d \mu ( \mathbf{g} )  
\\ 
= 
& 
2^{-kd} \1 S_{2^k} ( \Delta^{(k)} ),   
\end{split} 
\end{equation} 
which we say is generated by the given series. 
In \eqref{Eq:Canon-Iso}, one may write 
$\mathbf{n} \leq \mathbf{M}$ instead of $\mathbf{n} \leq 2^k\mathbf{1}$, 
provided that $2^k\mathbf{1} \leq \mathbf{M}$.

With a suitable choice of the concept of integral every series \eqref{Eq:WSer} 
is the Fourier--Walsh series 
of the quasi-measure $\tau$ generated by it, 
that is, $a_{\mathbf{n}} \equiv \widehat{\tau}_{\mathbf{n}}$. 
Here $\widehat{\tau}_{\mathbf{n}}$ are the 
\emph{Fourier--Walsh coefficients} of $\tau$,    
\begin{equation}
\label{Eq:Rec-Q-M}
\widehat{\tau}_{ \mathbf{n} }
:= 
\int\limits_{ \mG^d } 
W_{\mathbf{n}} \, d \tau
:=
\sum\limits_{ \mathbf{m} < 2^k \mathbf{1} } 
W_{ \mathbf{n} }  
\big( \Delta^{(k)}_{\mathbf{m}} \big) 
\tau 
\big( \Delta^{(k)}_{\mathbf{m}} \big),   
\end{equation} 
the equality on the right is considered 
when $k$ is sufficiently large for  
$\mathbf{n} < 2^k \mathbf{1}$ to hold.
Conversely, the value of quasi-measure $\tau$ on any dyadic cube  
can be expressed through its Fourier--Walsh coefficients:
\begin{equation} 
\label{Eq:Canon-Iso-add}
\tau ( \Delta^{(k)} ) 
= 
2^{-kd}
\int\limits_{ \Delta^{(k)} } 
\sum_{ \mathbf{n} < 2^k \mathbf{1} } 
\widehat{\tau}_{ \mathbf{n} } W_{ \mathbf{n} } 
d \mu. 
\end{equation} 

The \emph{support} of a quasi-measure $\tau$ 
is defined as the (closed) set $F = \mathbb{G}^d \setminus G$, 
where $G$ is the union of all dyadic cubes 
$\Delta_0$ such that $\tau(\Delta) = 0$ 
for all dyadic cubes $\Delta \subset \Delta_0$. 
Notation: $\mathrm{supp} \, \tau$.

For details, see: \cite[Section~2.3]{MP-EMJ-2019}; \cite{Schipp-and-Co};  
\cite[Chapter~4]{VA-Hung-2004}. 


\subsubsection{} 
\label{Subsub:F-Tau-05}

If a set $F \subset \mG^d$ is closed 
and the series \eqref{Eq:WSer} convergence over cubes to zero on 
$\mG^d \setminus F$, then $\mathrm{supp} \, \tau \subset F$ 
for the quasi-measure $\tau$ generated by this series
For a proof, see, for example, \cite[Lemma~1]{plotnikov-07-2}.

\subsubsection{} 

From formula \eqref{Eq:Rec-Q-M}, it is easy to obtain the following integral representation for the partial sums \eqref{Eq:PartSums} of a series \eqref{Eq:WSer}:
\begin{equation}
\label{Eq:PS-IR} 
\begin{split}
S_{\mN} ( \mathbf{g} ) 
& 
= 
\int\limits_{ \mG^d } 
D_{\mN} ( \mathbf{g} \oplus \mathbf{h} ) \, d \tau ( \mathbf{h} ) 
:=
\sum\limits_{ \mathbf{m} < 2^k \mathbf{1} } 
\tau 
\big( \Delta^{(k)}_{\mathbf{m}} \big) 
D_{\mN}  
\big( \mathbf{g} \oplus \Delta^{(k)}_{\mathbf{m}} \big) 
\\ 
& 
= 
\int\limits_{ \mG^d } 
D_{\mN} ( \mathbf{h} ) \, d \tau ( \mathbf{g} \oplus \mathbf{h} ) 
:=
\sum\limits_{ \mathbf{m} < 2^k \mathbf{1} } 
\tau 
\big( \mathbf{g} \oplus \Delta^{(k)}_{\mathbf{m}} \big) 
D_{\mN}  
\big( \Delta^{(k)}_{\mathbf{m}} \big).  
\end{split} 
\end{equation}
Here $\mathbf{N} < 2^k \mathbf{1}$. 
For details, see, e.g., \cite{plotnikov-2010}, \cite{MP-EMJ-2019}. 


\subsubsection{} 
\label{Subsubs:Tau-F}

To each nonempty closed set $F \subset \mathbb{G}^d$,  
we associate a nonnegative quasi-measure $\tau = \tau_F$  
satisfying: 
$\tau (\mG^d) = 1$; 
equality $\tau (\Delta) = 0$ holds if and only if 
$\Delta \cap F = \emptyset$, where $\Delta$ is a dyadic cube; 
if $\Delta^{(k)}_{\mathbf{m}} \cap F \ne \emptyset$ 
and exactly $M$ of $2^d$ adjacent dyadic cubes $\Delta^{(k+1)}_{2 \mathbf{m}+\boldsymbol{\sigma}}$, 
$\boldsymbol{\sigma} \in \{ 0,1 \}^d$,  
have a nonempty intersection with $F$, 
then 
\begin{equation} 
\label{Eq:Tau-F-111}
\tau ( \Delta^{(k+1)}_{2 \mathbf{m}+\boldsymbol{\sigma}} ) 
:= 
\begin{cases}
\dfrac{ \tau ( \Delta^{(k)}_{\mathbf{m}} ) }{M}, 
& 
\text{if $\Delta^{(k+1)}_{2 \mathbf{m}+\boldsymbol{\sigma}}  
\cap F \ne \emptyset$}, 
\\ 
0 
& 
\text{otherwise}.
\end{cases} 
\end{equation}  
It is easy to check that such a quasi-measure exists
and is unique. 
Obviously, $\mathrm{supp}\, \tau_F = F$. 

A similar construction was used in ~\cite{plotnikov-2004}. 

We restrict the quasi-measure $\tau_F$ to a dyadic cube $\widetilde{\Delta}$  
by considering the set function 
\[ 
\tau_F |_{\widetilde{\Delta}} 
\colon \mathcal{B} \to \mathbb{C}, 
\quad 
\tau_F |_{\widetilde{\Delta}} ( \Delta )
:= 
\tau_F ( \widetilde{\Delta} \cap \Delta ) 
\]
(we use the obvious fact that  
the intersection of two dyadic cubes is either empty  
or is itself a dyadic cube).  

It is easy to check that the function  
$\tau = \tau_F \big|_{\widetilde{\Delta}}$  
satisfies \eqref{Eq:QM-Char-Prop} and is therefore a quasi-measure.  
This quasi-measure is nonnegative and is identically zero  
if and only if $\widetilde{\Delta} \cap F = \emptyset$.

Obviously, $\mathrm{supp} \, \tau_F |_{\widetilde{\Delta}} = F \cap \widetilde{\Delta}$.

 
\subsection{Auxiliary results} 


\begin{prop} 
\label{Prop:WSer-Num}
Suppose the indices of all nonzero terms of the $d$-dim numerical series
\begin{equation}
\label{Eq:WSer-Num}
\sum_{ \mathbf{n} \in \mbN_0^d } 
a_{ \mathbf{n} } 
= 
\sum_{ n^1, \ldots, n^d \in \mbN_0 } 
a_{ n^1, \ldots, n^d }  
\end{equation} 
lie in the set $\{ \mathbf{0} \} \bigsqcup \bigsqcup\limits_{k \in \mathbb{N}_0} B_k$,
where, recall that, $B_k := 
\big\{ 
    \mn \in \mN^d 
    \colon 
    2^k \mathbf{1} \le \mn < 2^{k + 1} \mathbf{1}   
\big\}$,   
and the series itself converges to the sum $S$ over rectangles.  
Then it converges to $S$ in the sense of iterated convergence, 
whatever the order of the iterated summation. 
\end{prop}

\begin{proof} 
We need to show that
\begin{equation}
\label{Eq:WSer-Rep-Conv-1}
\sum_{ n^{j_1} \in \mbN_0 } 
\bigg( 
    \sum_{ n^{j_2} \in \mbN_0 } 
    \bigg( 
        \ldots 
        \bigg( 
            \sum_{ n^{j_d} \in \mbN_0 } 
            a_{ n^1, \ldots, n^d } 
        \bigg)
    \bigg)    
\bigg) 
= S
\end{equation}
for any permutation $(j_1, \ldots, j_d )$ of the numbers $1$, $\ldots$, $d$.

From the conditions of the proposition it follows that
for a fixed $n^{j_1}$, 
only finitely many terms of the series \eqref{Eq:WSer-Num} 
are nonzero.
Consequently, all series in parentheses in \eqref{Eq:WSer-Rep-Conv-1}
become finite sums,
and therefore the expression inside the pair of outer
parentheses is well-defined.

Next, take a natural number $N^1$. 
Then
\begin{equation}
\label{Eq:WSer-Rep-Conv-2}
\begin{split} 
& 
\sum_{ n^{j_1} < N^1 } 
\bigg( 
    \sum_{ n^{j_2} \in \mbN_0 } 
    \bigg( 
        \ldots 
        \bigg( 
            \sum_{ n^{j_d} \in \mbN_0 } 
            a_{ n^1, \ldots, n^d } 
        \bigg)
    \bigg)    
\bigg) 
\\ 
& 
= 
\sum_{ n^{j_1} < N^1 } 
\sum_{ n^{j_2} < N^2 } 
\ldots 
\sum_{ n^{j_d} < N^d } 
a_{ n^1, \ldots, n^d }
= 
S_{\mathbf{N}},  
\end{split} 
\end{equation}
where $N^2$, $\ldots$, $N^d$ are taken sufficiently large so that for all nonzero terms of the series \eqref{Eq:WSer-Num} with $n^{j_1} < N^1$, the following inequalities hold:
\[ 
n^{j_2} < N^2, \quad \ldots, \quad n^{j_d} < N^d. 
\] 
Since by assumption the series \eqref{Eq:WSer-Num} 
converges to $S$ over rectangles, 
the right-hand side of \eqref{Eq:WSer-Rep-Conv-2}
tends to $S$ as $\min N^j \to \infty$. 
Consequently, the left-hand side also tends to $S$ as $N^1 \to \infty$, 
which means that equality \eqref{Eq:WSer-Rep-Conv-1} holds. 
The proposition is proved.
\end{proof}

\begin{lem}
If  $m_s \in \mathbb{N}_0$ and  
$\mathbf{m}$, $\mathbf{m}^{\prime}$, $\mathbf{p} < 2^{m_s} \mathbf{1}$, then 
\begin{equation} 
\label{Eq:WF-Scailing} 
W_{ 2^{m_s} \mathbf{p} } 
( \Delta_{ 2^{m_s} \mathbf{m} + \mathbf{m}^{\prime} }^{(2m_s)} ) 
= 
W_{ \mathbf{p} } 
( \Delta_{ \mathbf{m}^{\prime} }^{(m_s)} ) 
=: 
W_{ \mathbf{p} \mathbf{m}^{\prime} }^{(m_s)}. 
\end{equation} 
\end{lem}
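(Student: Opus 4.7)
План доказательства. Ключевое наблюдение --- в силу мультипликативной структуры $d$-мерных функций Уолша \eqref{Eq:VC-F} и двоичных кубов \eqref{Eq:P-adic-Cube} утверждение леммы распадается в произведение по координатам, так что достаточно установить одномерный вариант:
\[
W_{2^{m_s} p} \bigl( \Delta^{(2 m_s)}_{2^{m_s} m + m'} \bigr)
=
W_p \bigl( \Delta^{(m_s)}_{m'} \bigr)
\]
при $m, m', p < 2^{m_s}$. Таким образом, задача сведётся к прозрачной комбинаторной проверке на уровне двоичных разложений чисел $n := 2^{m_s} p$ и $M := 2^{m_s} m + m'$.

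Далее планирую применить формулу \eqref{Eq:WF-DI-1}. Умножение на $2^{m_s}$ сдвигает двоичные коэффициенты вверх: $n_j = 0$ при $j < m_s$ и $n_j = p_{j - m_s}$ при $j \ge m_s$; при этом $M_j = m'_j$ при $j \in [0, m_s)$ и $M_j = m_{j - m_s}$ при $j \in [m_s, 2 m_s)$, поскольку $m, m' < 2^{m_s}$. Подставляя в \eqref{Eq:WF-DI-1}, получим сумму $\sum_{j=0}^{2 m_s - 1} n_j M_{2 m_s - 1 - j}$, у которой первые $m_s$ слагаемых зануляются (так как $n_j = 0$ при $j < m_s$); для оставшихся индексов $j \in [m_s, 2 m_s)$ имеем $2 m_s - 1 - j \in [0, m_s)$, а потому $M_{2 m_s - 1 - j} = m'_{2 m_s - 1 - j}$. Замена $i = j - m_s$ превращает оставшуюся сумму в $\sum_{i = 0}^{m_s - 1} p_i \, m'_{m_s - 1 - i}$, что и даёт значение $W_p (\Delta^{(m_s)}_{m'})$. Возврат к $d$-мерному случаю --- покоординатное перемножение \eqref{Eq:VC-F}.

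Содержательно трудных мест здесь нет: лемма выражает самоподобие матрицы Уолша при удвоении ранга. Единственное, за чем приходится аккуратно следить, --- это правильное соответствие индексов при подстановке двоичных коэффициентов в \eqref{Eq:WF-DI-1}: обращение индекса $j \leftrightarrow k - 1 - j$ должно сочетаться со сдвигом $j \mapsto j - m_s$ так, чтобы результирующая сумма оказалась индексированной от $0$ до $m_s - 1$ именно с нужными двоичными коэффициентами числа $m'$.
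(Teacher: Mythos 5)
Ваше доказательство корректно и по существу совпадает с доказательством в статье: та же редукция к одномерному случаю через мультипликативность \eqref{Eq:VC-F}, та же подстановка двоичных разложений чисел $2^{m_s}p$ и $2^{m_s}m+m'$ в формулу \eqref{Eq:WF-DI-1} с занулением первых $m_s$ слагаемых за счёт нулевых младших разрядов $2^{m_s}p$. Разница лишь нотационная: в статье двоичные строки выписаны явно, у вас --- через индексные формулы; итоговая сумма $\sum_{i=0}^{m_s-1} p_i\, m'_{m_s-1-i}$ одна и та же.
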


\begin{proof} 
First, we prove \eqref{Eq:WF-Scailing} 
for the one-dimensional case.  
If 
\[ 
p 
= 
\sum\limits_{k < m_s} 
p_k 2^k  
=: ( p_{m_s -1} \, \ldots \, p_0 )_2, 
\quad 
m = ( a_{m_s -1} \, \ldots \, a_0 )_2, 
\quad  
m^{\prime} = ( b_{m_s -1} \, \ldots \, b_0 )_2  
\]
are the binary expansions of the numbers $p$, $m$ and $m^{\prime}$, 
then 
\[ 
2^{m_s} m + m^{\prime} 
= 
( a_{m_s -1} \, \ldots \, a_0 \, b_{m_s -1} \, \ldots \, b_0 )_2,  
\quad  
2^{m_s}p = ( p_{m_s -1} \, \ldots \, p_0 \, 
\underset{\text{$m_s$ zeros}}{ \underbrace{0 \, \ldots \, 0} } )_2,   
\] 
\begin{equation} 
\label{Eq:WF-Scailing-1} 
\begin{split}
W_{ 2^{m_s} p } 
( \Delta_{ 2^{m_s} m + m^{\prime} }^{(2m_s)} ) 
& 
\stackrel{\eqref{Eq:WF-DI-1}}{=} 
(-1)^{ a_{m_s -1} \cdot 0 + \ldots + a_0 \cdot 0 + b_{m_s -1} p_0 + \ldots + b_0 p_{m_s -1} } 
\\ 
& 
= 
(-1)^{ b_{m_s -1} p_0 + \ldots + b_0 p_{m_s -1} } 
= 
W_p ( \Delta_{ m^{\prime} }^{(m_s)} ).  
\end{split} 
\end{equation}
In the $d$-dimensional case, we obtain  
\[ 
\begin{split}
W_{ 2^{m_s} \mathbf{p} } 
( \Delta_{ 2^{m_s} \mathbf{m} + \mathbf{m}^{\prime} }^{(2m_s)} ) 
& 
= 
\prod\limits_{j=1}^d 
W_{ 2^{m_s} p^j } 
( \Delta_{ 2^{m_s} m^j + ( m^{\prime} )^j }^{(2m_s)} ) 
\\ 
& 
\stackrel{\eqref{Eq:WF-Scailing-1}}{=} 
\prod\limits_{j=1}^d 
W_{ p^j } 
( \Delta_{ ( m^{\prime} )^j }^{(m_s)} ) 
= 
W_{ \mathbf{p} } 
( \Delta_{ \mathbf{m}^{\prime} }^{(m_s)} ), 
\end{split}
\]
which completes the proof. 
\end{proof} 


\begin{lem}
If $N = 2^{k_1} + \ldots + 2^{k_s}$, 
$k_1 > \ldots > k_s$, then 
\begin{equation} 
\label{Eq:WFS-5} 
D_N 
= 
\sum\limits_{j=1}^s  
R_{k_1} \ldots R_{k_{j-1}} 
D_{2^{k_j}}. 
\end{equation} 
\end{lem}

\begin{proof} 
Using formula \eqref{Eq:WFS-2} several times, we obtain
\[ 
\begin{split} 
D_N 
& 
= 
D_{ 2^{k_1} + \ldots + 2^{k_s} } 
\\ 
& 
= 
D_{ 2^{k_1} } + R_{k_1} D_{ 2^{k_2} + \ldots + 2^{k_s} }  
\\
& 
= 
D_{ 2^{k_1} } 
+ 
R_{k_1} D_{ 2^{k_2} }
+ 
R_{k_1} R_{k_2}
D_{ 2^{k_3} + \ldots + 2^{k_s} } 
\\
&
\ldots\ldots\ldots\ldots\ldots\ldots\ldots\ldots\ldots\ldots\ldots\ldots
\\
& 
= 
D_{ 2^{k_1} } 
+ 
R_{k_1} D_{ 2^{k_2} }
+ 
R_{k_1} R_{k_2} D_{ 2^{k_3} }
+ 
\ldots 
+
R_{k_1} R_{k_2} \ldots R_{k_{s-1}} D_{ 2^{k_s} }.
\end{split} 
\] 
\end{proof}


\begin{prop} 
\label{Prop:Main-2}
Let a dyadic cube $\Delta^{(w)}$ $($of rank $w)$ and a series of the form \eqref{Eq:WSer} be given, and let $\tau$ be the quasi-measure generated by this series, as in \eqref{Eq:Canon-Iso}.
If $\tau (\Delta) = 0$ for any dyadic cube 
$\Delta \subset \Delta^{(w)}$, then  
$S_{2^{w} \mathbf{M}} ( \mathbf{g} ) = 0$  
for all $\mathbf{g} \in \Delta^{(w)}$ 
and $\mathbf{M} \in \mbN^d$.  
\end{prop} 

\begin{proof} 
Equation \eqref{Eq:PS-IR} gives 
\begin{equation} 
\label{Eq:WFS-12}
S_{ 2^{w} \mathbf{M} } ( \mathbf{g} ) 
=
\sum\limits_{ \mathbf{m} < 2^k \mathbf{1} } 
\tau 
\big( \mathbf{g} \oplus \Delta^{(k)}_{\mathbf{m}} \big) 
D_{ 2^{w} \mathbf{M} }  
\big( \Delta^{(k)}_{\mathbf{m}} \big), 
\end{equation}
where $k$ is chosen such that $2^{w} \mathbf{M} \le 2^k \mathbf{1}$. 

Each component of the vector $2^{w}\mathbf{M}$ is of the form
\[ 
2^{k_1} + \ldots + 2^{k_s}, 
\quad 
k_1 > \ldots > k_s \ge w,
\] 
with corresponding $k_j$. 
Therefore, from formula \eqref{Eq:WFS-5} 
combined with \eqref{Eq:WFS-11} it follows 
that the Dirichlet kernel $D_{2^{w}\mathbf{M}}(\mathbf{g})$ 
is a finite sum of Dirichlet kernels 
of the form $D_{2^{w^1},\ldots,2^{w^d}}(\mathbf{g})$, 
multiplied by values at the point $\mathbf{g}$ 
of some $d$-dim Walsh functions, 
where all $w^i \geq w$.
If  $\mathbf{g} \notin \Delta_{\mathbf{0}}^{(w)}$, then all $D_{2^{w^1}, \ldots, 2^{w^d}} ( \mathbf{g} )$ are zero by \eqref{Eq:WFS-3} and \eqref{Eq:WFS-11}, 
hence $D_{ 2^{w} \mathbf{M} } ( \mathbf{g} )$ also vanishes.  
Therefore, every term in the right-hand side of \eqref{Eq:WFS-12} 
satisfying $\Delta^{(k)}_{\mathbf{m}} \subsetneqq \Delta_{\mathbf{0}}^{(w)}$ 
is equal to zero.

If $\Delta^{(k)}_{\mathbf{m}} \subset \Delta_{\mathbf{0}}^{(w)}$, 
then $\mathbf{g} \oplus \Delta^{(k)}_{\mathbf{m}} \subset \Delta^{(w)}$, 
and consequently $\tau 
\big( \mathbf{g} \oplus \Delta^{(k)}_{\mathbf{m}} \big) = 0$ 
by the conditions of the proposition.
 
As a result, all terms on the right-hand side of \eqref{Eq:WFS-12} 
equal zero, and therefore the sum vanishes as well. 
This concludes the proof of the proposition.
\end{proof}


The next result establishes that, at every point outside the support 
of the quasi-measure generated by the multiple Walsh series, 
there exists a sufficiently massive subsequence of partial sums vanishing at that point.

\begin{prop} 
\label{Prop:Main-3}
Let a series of the form \eqref{Eq:WSer} be given,  
and denote by $\tau$ the quasi-measure generated by it via \eqref{Eq:Canon-Iso}.
If $\mathbf{g} \notin\mathrm{supp}\, \tau$,
there is a non-negative integer $w = w ( \mathbf{g} )$ this, 
that $S_{2^{w} \mathbf{M}} ( \mathbf{g} ) = 0$  
for all $\mathbf{M}\in\mbN^d$.

The value of $w$ may be chosen as the minimal rank 
of dyadic cubes $\Delta^{(w)}$ satisfying 
$\tau (\Delta) = 0$ for every dyadic cube $\Delta \subset \Delta^{(w)}$.
\end{prop} 

\begin{proof} 
By the definition of the quasi-measure's support,
for a given point $\mathbf{g}$ outside it,
there exists a dyadic cube $\Delta^{(w)} \ni \mathbf{g}$
$($of rank $w)$ such that $\tau(\Delta) = 0$ for any dyadic cube
$\Delta \subset \Delta^{(w)}$.
It remains to apply Proposition~\ref{Prop:Main-2}. 
\end{proof}


\begin{cor} 
\label{Cor:Main-4} 
Let $\tau = \tau_F$ be the quasi-measure constructed in Subsubs.~\ref{Subsubs:Tau-F} 
for a non-empte closed set $F$.
Then,  for every $\mathbf{g} \notin F$,  
there exists a non-negative integer $w = w(\mathbf{g})$ such that 
the partial sums of a series \eqref{Eq:WSer} 
that generates the quasi-measure $\tau$ 
satisfy the equality 
$S_{2^{w} \mathbf{M}} ( \mathbf{g} ) = 0$  
for all $\mathbf{M} \in \mbN^d$. 
\end{cor}


\section{Construction of $M$-set for multiple Walsh series and null-series}
\label{S:M-Set}

Consider the sequence of non-negative integers $(m_s, \, s \in \mathbb{N})$  
satisfying $m_1 = 0$ and the recurrence relation $m_{s+1} = 2(2m_s + 1)$.

For each $s$,  
we represent the group $\mathbb{G}^d$  
as a disjoint union of  
$2^{m_s d}$ dyadic cubes of rank $m_s$,  
and as a disjoint union of  
$2^{2m_s d}$ dyadic cubes of rank $2m_s$:
\[ 
\begin{split}
\mG^d 
& 
= 
\bigsqcup\limits_{ \mathbf{m} < 2^{m_s} \cdot \mathbf{1} } 
\Delta_{ \mathbf{m} }^{(m_s)} 
\\
& 
= 
\bigsqcup\limits_{ \mathbf{m} < 2^{m_s} \cdot \mathbf{1} } 
\bigsqcup\limits_{ \mathbf{m}^{\prime} < 2^{m_s} \cdot \mathbf{1} } 
\Delta_{ 2^{m_s} \mathbf{m} + \mathbf{m}^{\prime} }^{(2m_s)}. 
\end{split} 
\]  
Obviously, 
$\Delta_{ 2^{m_s} \mathbf{m} + \mathbf{m}^{\prime} }^{(2m_s)} 
\subset \Delta_{ \mathbf{m} }^{(m_s)}$ 
for all admissible $\mathbf{m}$ и $\mathbf{m}^{\prime}$. 
Let  
\begin{equation} 
\label{Eq:Tau-Coeff-001}  
F_s 
:= 
\bigsqcup\limits_{ \mathbf{m} < 2^{m_s} \cdot \mathbf{1} } 
\bigsqcup\limits_{ \mathbf{m}^{\prime} < 2^{m_s} \cdot \mathbf{1} } 
\big\{ 
    \mg \in \Delta_{ 2^{m_s} \mathbf{m} + \mathbf{m}^{\prime} }^{(2m_s)} 
    \colon 
    R_{ 2m_s \textbf{1} } (\mg) = W_{ \mathbf{m} \mathbf{m}^{\prime} }^{^{(m_s)}}  
\big\}.
\end{equation} 

If we decompose each dyadic cube $\Delta_{ 2^{m_s} \mathbf{m} + \mathbf{m}^{\prime} }^{(2m_s)}$ into $2^d$ contiguous dyadic cubes of rank $2m_s + 1$,  
we find that exactly $2^{d-1}$ of them lie within $F_s$,  
while the remaining $2^{d-1}$ are disjoint from $F_s$. 

We provide a geometric illustration of the set $F_s$.  
For each $\mathbf{m} < 2^{m_s} \mathbf{1}$,  
consider the $d$-dim Walsh function 
$W_{ 2^{m_s} \textbf{1} + \mathbf{m} } = R_{ m_s \textbf{1} } W_{ \mathbf{m} }$ 
with index from the binary block $B_{m_s} 
= 
\big\{ 
    2^{m_s} \mathbf{1} \le \mn < 2^{m_s + 1} \mathbf{1}   
\big\}$ 
and consider its ``scaled graph''   
or more precisely, the level set  
corresponding to the value $1$,  
compressed along each coordinate by a factor of $2^{m_s}$  
and placed within the dyadic cube $\Delta^{(m_s)}_{\mathbf{m}}$.
As a result, we get a set under the sign
of the inner union in \eqref{Eq:Tau-Coeff-001}, 
and the union by $\mathbf{m}$ of all such sets is $F_s$.
In Fig. 1, the set $F_2$ is shown in black.

\vspace{5pt}
\centerline{
\includegraphics[width=70mm, height=70mm]{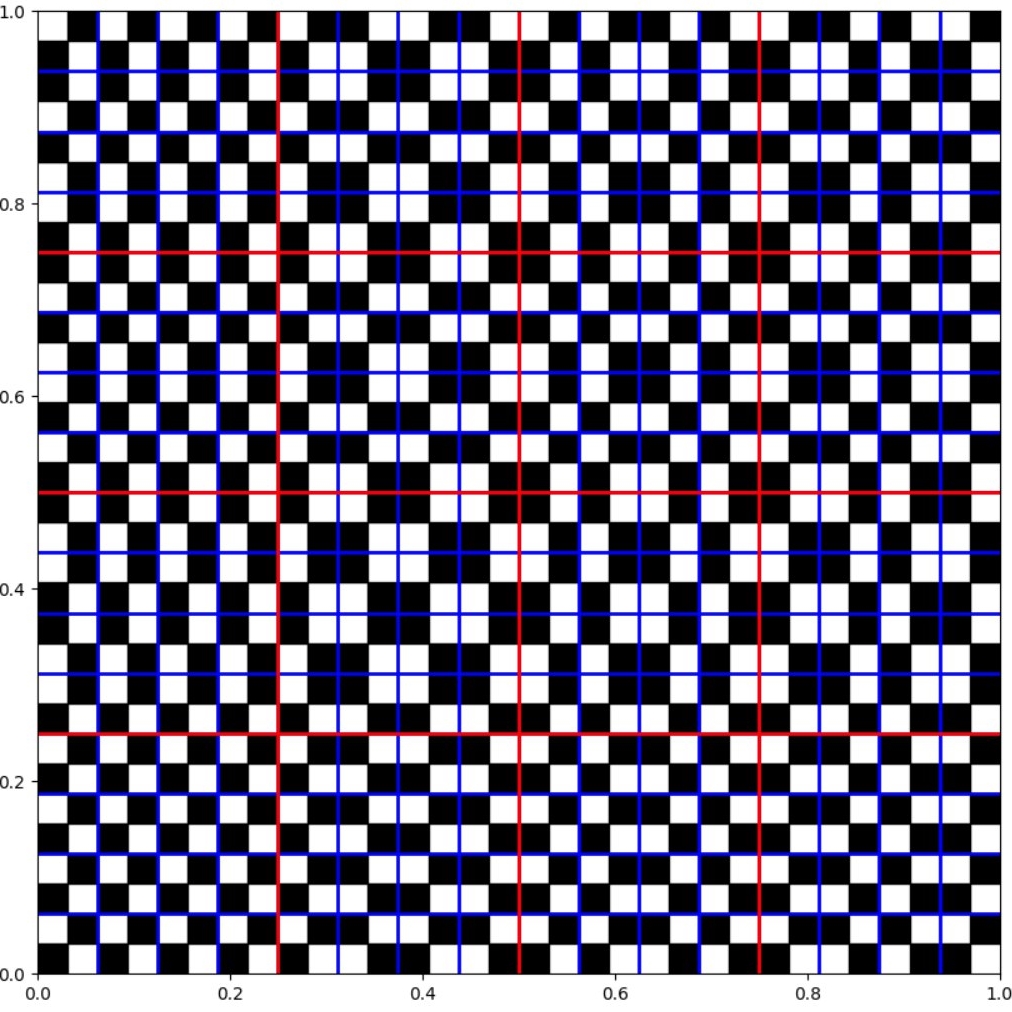}
}

Finally, let 
\begin{equation} 
\label{Eq:Tau-Coeff-002} 
F 
:= 
\bigcap\limits_{s=1}^\infty 
F_s, 
\qquad 
\widetilde{F}_s
:= 
\bigcap\limits_{k=1}^s 
F_k, 
\quad 
s \in \mathbb{N}. 
\end{equation}  

Alongside $F$, we consider the class of sets 
obtained from $F$ in the following way. 
For the sequence $(m_s)$ defined above, 
let $S$ denote the direct product of the symmetric groups $S( (0:2^{m_s}-1 )^d)$ 
of permutations of the set $( 0:2^{m_s}-1 )^d$.
Every element of $S$ is a sequence
$\boldsymbol{\pi} = ( \boldsymbol{\pi}_s, \, s \in \mathbb{N} )$, 
which generates sets 
\begin{equation}
\label{Eq:Fspi}
F_s^{\boldsymbol{\pi}} := 
\bigsqcup\limits_{ \mathbf{m} < 2^{m_s} \cdot \mathbf{1} } 
\bigsqcup\limits_{ \mathbf{m}^{\prime} < 2^{m_s} \cdot \mathbf{1} } 
\big\{ 
    \mg \in \Delta_{ 2^{m_s} \mathbf{m}+\mathbf{m}^{\prime} }^{(2m_s)} 
    \colon 
    R_{ 2m_s \textbf{1} } (\mg) = W_{ \mathbf{\boldsymbol{\sigma}_s(m)} \mathbf{m}^{\prime} }^{(m_s)}  
\big\}, 
\qquad 
s \in \mathbb{N}.
\end{equation}

In other words, the set $F_s^{\boldsymbol{\pi}}$ 
is obtained from $F_s$ by applying 
a transformation that shifts each portion  
$F_s \cap \Delta_{\mathbf{m}}^{(m_s)}$ 
by the set $\Delta_{\mathbf{m} \oplus \boldsymbol{\pi}_s(\mathbf{m})}^{(m_s)}$.  
Consequently, $F_s^{\boldsymbol{\pi}}$ is 
the union of shifted portions.

Let
\begin{equation}
\label{Eq:Fpi}
F^{\boldsymbol{\pi}} 
:= 
\bigcap_{s=1}^{\infty} 
F_s^{\boldsymbol{\pi}}, 
\qquad 
\widetilde{F}_s^{\boldsymbol{\pi}} 
:= 
\bigcap\limits_{k=1}^s 
F_s^{\boldsymbol{\pi}}, 
\quad 
s \in \mbN.  
\end{equation} 

Comparing formulas \eqref{Eq:Fspi} and \eqref{Eq:Fpi} 
with \eqref{Eq:Tau-Coeff-001} and \eqref{Eq:Tau-Coeff-002}, 
we observe that when all permutations 
$\boldsymbol{\pi}_s$ are identity maps, 
the sets $F_s^{\boldsymbol{\pi}}$, 
$\widetilde{F}_s^{\boldsymbol{\pi}}$ 
and $F^{\boldsymbol{\pi}}$ 
reduce to $F_s$, $\widetilde{F}_s$, and $F$, respectively.

It is straightforward to demonstrate $\mu \widetilde{F}_s^{\boldsymbol{\pi}} = 2^{-s}$ and $\mu F^{\boldsymbol{\pi}} = 0$. 

Consider the subgroup $S^{\prime} \subseteq S$ 
of elements $\boldsymbol{\pi} = ( \boldsymbol{\pi}_s, \, s \in \mathbb{N} )$ 
where $\boldsymbol{\pi}_s = \times_{j=1}^d \pi_s^j$, 
which means 
\begin{equation}
\label{Eq:pis-1} 
\boldsymbol{\pi}_s ( \mathbf{m} ) 
= 
( \pi_s^1 ( m^1), \ldots, \pi_s^d ( m^d ) ), 
\quad  
\mathbf{m} \in \{ 0 : 2 m_s - 1 \}^d. 
\end{equation}   

In \S~ \ref{S:Main-results} (Theorems~
\ref{T:Main-1} и \ref{T:Main-1-1}) we will prove that $F$ and all sets $F^{\boldsymbol{\pi}}$ with $\boldsymbol{\pi} \in S^{\prime}$ 
are $M$-sets for the $d$-dim Walsh system under convergence over rectangles
and, as a corollary, over cubes, as well as under iterated convergence. 
We now construct the null-series realizing these $M$-sets, while the proof that these are indeed null-series will be given in Theorems~\ref{T:Main-1} and \ref{T:Main-1-1} themselves. Let $\tau := \tau_F$ be the quasi-measure constructed from $F$ via the construction in Section~\ref{Subsubs:Tau-F}, 
and consider the multiple Walsh series 
$\sum\limits_{ \mn = \mathbf{0} }^\infty \tau_{\mn} W_{\mn} ( \mg )$ 
that generates it. 

Note that the coefficients $\tau_{\mathbf{n}}$  
of this series coincide  
with the Walsh--Fourier coefficients $\widehat{\tau}_{\mathbf{n}}$  
of the quasi-measure $\tau$, for which explicit formulas  
were established in Lemmas~\ref{Lem:2-1} and \ref{Lem:2-1-1}. 

Similarly, the quasi-measure $\tau^{\boldsymbol{\pi}} := \tau_{F^{\boldsymbol{\pi}}}$, 
is generated by some series 
$\sum\limits_{ \mn = \mathbf{0} }^\infty 
\tau^{\boldsymbol{\pi}}_{\mn} W_{\mn} ( \mg )$ 
with coefficients  
$( \tau^{\boldsymbol{\pi}} )_{\mn}  \equiv \widehat{\tau^{\boldsymbol{\pi}}}_{\mn}$,   
given in Remark~\ref{Rem:2-1} and Lemma~\ref{Lem:2-1-1}.  

From formulas \eqref{Eq:Tau-Coeff-001} and \eqref{Eq:Tau-Coeff-002} that define the set $F$, 
it follows that in formula \eqref{Eq:Tau-F-111}, 
describing the construction algorithm of the quasi-measure $\tau_F$, 
the constant $M$ equals  $2^{d-1}$, if $k = 2m_s$ 
for some $s$ and $2^d$ in other cases. 
Moreover, for dyadic cubes $\Delta^{(m_s)}$ 
relations $\Delta^{(m_s)} \cap F \neq \emptyset$ 
and $\Delta^{(m_s)} \subset \widetilde{F}_{s-1}$ 
are equivalent, 
while for  $\Delta^{(2m_s +1)}$ relations $\Delta^{(2m_s+1)} \cap F \neq \emptyset$ 
and $\Delta^{(2m_s+1)} \subset F_s$  are equivalent. 
This yields: 
\begin{equation} 
\label{Eq:Tau-Coeff-01} 
\tau( \Delta^{(m_s)} ) 
= 
\mathrm{I}
( \Delta^{(m_s)} \subset \widetilde{F}_{s-1} ) 
\frac{2^s}{2^{dm_s+1}};  
\end{equation} 
\begin{equation} 
\label{Eq:Tau-Coeff-03} 
\begin{split}
\tau( \Delta^{( 2m_s+1 )} ) 
& 
= 
\mathrm{I}
( \Delta^{(2m_s+1)} \subset F_s ) 
\, 2^{s - d (2m_s+1)} 
\\
& 
= 
2^s 
\mathrm{I}
( \Delta^{(2m_s +1)} \subset F_s ) 
\,
\mu( \Delta^{( 2m_s + 1 )} ). 
\end{split} 
\end{equation}

The formulas remain valid 
if we substitute 
$\tau$, $\widetilde{F}_{s-1}$ and $F_s$ 
with $\tau^{\boldsymbol{\pi}}$, $\widetilde{F}_{s-1}^{\boldsymbol{\pi}}$, and $F_s^{\boldsymbol{\pi}}$ 
respectively. 

If we restrict the quasi-measure $\tau$ or $\tau^{\boldsymbol{\pi}}$ to a dyadic cube $\Delta^{(m_{s_0})}$ that has nonempty intersection with $F$, then for $s \ge s_0$, the values of the obtained 
quasi-measure on $\Delta^{(2m_s+1)}$ 
and $\Delta^{(2m_s+1)}$ are determined  
by formulas \eqref{Eq:Tau-Coeff-01} and \eqref{Eq:Tau-Coeff-03} 
if the sets $\widetilde{F}_{s-1}$ and $F_s$ are replaced 
by their intersections with $\Delta^{(m_{s_0})}$.


\section{Auxliary lemmas}
\label{S:Prelim-3}


For the quasi-measure $\tau$ and all dyadic cubes $\Delta = \Delta^{(m(s))}$, 
we consider the quantity
\[ 
\widehat{\tau}_{\mn} ( \Delta ) 
:= 
\int\limits_{\Delta} 
W_{\mn} \, d \tau   
\]  
(in \cite{skvortsov-1977-1} the quantities $\widehat{\tau}_{\mn} ( \Delta ) / \mu \Delta$ 
are called the \emph{local Walsh--Fourier coefficients} 
of a quasi-measure $\tau$). 
Clearly,  
\begin{equation} 
\label{Eq:Loc-FC} 
\widehat{\tau}_{\mn}  
= 
\sum\limits_{ \mathbf{m} < 2^k \mathbf{1} }
\widehat{\tau}_{\mn} ( \Delta_{\mathbf{m}}^{(k)} ). 
\end{equation}


\begin{lem} 
\label{Lem:2-1}
Suppose that  
\begin{equation} 
\label{Eq:2}
\mn 
= 
2^{2m_s} \mathbf{1} 
+ 
2^{m_s} \mathbf{p} 
+ 
\mathbf{q}, 
\qquad 
\mathbf{0} 
\le 
\mathbf{p}, \, \mathbf{q} 
< 
2^{m_s} \mathbf{1}.  
\end{equation} 

Then for the quasi-measure $\tau = \tau_F$ constructed in \S~1, the following holds$:$
\begin{equation} 
\label{Eq:Tau-Coeff-000} 
\widehat{\tau}_{\mn} ( \Delta^{(m_s)}_{\mathbf{m}} )  
= 
2^{s - 1 - d m_s}
W_{ \mathbf{q} \mathbf{m} }^{(m_s)} 
\de_{\mathbf{m}}^{\mathbf{p}} 
\mathrm{I} ( \Delta_{ \mathbf{m} }^{(m_s)} \subset \widetilde{F}_{s-1} ); 
\end{equation} 
\begin{equation} 
\label{Eq:Tau-Coeff-0} 
\widehat{\tau}_{\mn} 
= 
2^{s - 1 - d m_s} 
W_{\mathbf{q} \mathbf{p}}^{(m_s)}  
\, 
\mathrm{I}( \Delta^{(m_s)}_{\mathbf{p}} 
\subset \widetilde{F}_{s-1} ).
\end{equation}
\end{lem}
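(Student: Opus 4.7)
План доказательства. Второе равенство \eqref{Eq:Tau-Coeff-0} я бы сразу получил из первого \eqref{Eq:Tau-Coeff-000} суммированием по $\mathbf{m}$ согласно \eqref{Eq:Loc-FC}: символ Кронекера $\delta_{\mathbf{m}}^{\mathbf{p}}$ оставит лишь слагаемое с $\mathbf{m} = \mathbf{p}$. Поэтому основная работа --- доказать \eqref{Eq:Tau-Coeff-000}.

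Если $\Delta_{\mathbf{m}}^{(m_s)} \not\subset \widetilde{F}_{s-1}$, то, так как каждое $F_k$ при $k < s$ есть объединение двоичных кубов ранга $2m_k + 1 < m_s$, куб $\Delta_{\mathbf{m}}^{(m_s)}$ либо целиком содержится в $\widetilde{F}_{s-1}$, либо не пересекается с $F$; во втором случае $\tau \equiv 0$ на $\Delta_{\mathbf{m}}^{(m_s)}$ и обе части \eqref{Eq:Tau-Coeff-000} нулевые. В нетривиальном случае $\Delta_{\mathbf{m}}^{(m_s)} \subset \widetilde{F}_{s-1}$ план таков: раздробить $\Delta_{\mathbf{m}}^{(m_s)}$ на двоичные кубы $\Delta^{(2m_s+1)}_{2(2^{m_s}\mathbf{m} + \mathbf{m}') + \boldsymbol{\sigma}}$, $\mathbf{m}' < 2^{m_s}\mathbf{1}$, $\boldsymbol{\sigma} \in \{0, 1\}^d$, и представить $\widehat{\tau}_{\mathbf{n}}(\Delta_{\mathbf{m}}^{(m_s)})$ как конечную сумму значений $W_{\mathbf{n}} \cdot \tau$ по этим кубам.

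Ключевое наблюдение: в разложении $\mathbf{n} = 2^{2m_s}\mathbf{1} + 2^{m_s}\mathbf{p} + \mathbf{q}$ двоичные поддержки трёх слагаемых не пересекаются, поэтому $W_{\mathbf{n}} = W_{2^{2m_s}\mathbf{1}} \cdot W_{2^{m_s}\mathbf{p}} \cdot W_{\mathbf{q}}$. На указанных кубах ранга $2m_s+1$ эти три множителя равны соответственно $(-1)^{|\boldsymbol{\sigma}|}$ (по \eqref{Eq:WF-DI-1}), $W^{(m_s)}_{\mathbf{p}\mathbf{m}'}$ (по \eqref{Eq:WF-Scailing}) и $W^{(m_s)}_{\mathbf{q}\mathbf{m}}$ --- функция $W_{\mathbf{q}}$ постоянна уже на кубах ранга $m_s$. По определению $F_s$ \eqref{Eq:Tau-Coeff-001} и по \eqref{Eq:Tau-Coeff-03} квазимера $\tau$ равна $2^{s - d(2m_s+1)}$ на тех $2^{d-1}$ из $2^d$ кубов, где $(-1)^{|\boldsymbol{\sigma}|} = W^{(m_s)}_{\mathbf{m}\mathbf{m}'}$, и нулю на остальных.

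Суммирование сначала по $\boldsymbol{\sigma}$ выделит множитель $2^{d-1} \cdot W^{(m_s)}_{\mathbf{m}\mathbf{m}'} \cdot 2^{s - d(2m_s+1)}$, после чего оставшаяся сумма по $\mathbf{m}'$ произведений $W^{(m_s)}_{\mathbf{p}\mathbf{m}'} W^{(m_s)}_{\mathbf{m}\mathbf{m}'}$ свернётся по $d$-мерной ортогональности строк матрицы Уолша \eqref{Eq:Walsh-Matrix-0} в $2^{dm_s} \delta_{\mathbf{m}}^{\mathbf{p}}$. Соединяя степени двойки ($d - 1 + s - d(2m_s+1) + dm_s = s - 1 - dm_s$), я получу правую часть \eqref{Eq:Tau-Coeff-000}. Основная трудность --- аккуратно проследить, на каком ранге каждый из трёх сомножителей $W_{\mathbf{n}}$ становится постоянным, и корректно применить \eqref{Eq:Walsh-Matrix-0}; арифметика в целом рутинная.
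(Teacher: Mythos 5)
Ваше предложение по существу совпадает с доказательством в статье: там тоже сначала выносится множитель $W^{(m_s)}_{\mathbf{q}\mathbf{m}}$, куб $\Delta^{(m_s)}_{\mathbf{m}}$ дробится на кубы ранга $2m_s+1$, значение $W_{2^{m_s}\mathbf{p}}$ вычисляется по лемме о масштабировании \eqref{Eq:WF-Scailing}, ненулевые значения $\tau$ выделяются по определению $F_s$ и формуле \eqref{Eq:Tau-Coeff-03}, а сумма по $\mathbf{m}'$ сворачивается по ортогональности \eqref{Eq:Walsh-Matrix-0}; формула \eqref{Eq:Tau-Coeff-0} получается суммированием \eqref{Eq:Tau-Coeff-000} по $\mathbf{m}$ через \eqref{Eq:Loc-FC}. План корректен, включая баланс степеней двойки.
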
 

\begin{proof} 
Taking into account equality \eqref{Eq:WF-Scailing}, we have
\begin{equation} 
\label{Eq:Tau-Coeff-02}  
\begin{split}
\widehat{\tau}_{\mn} 
( \Delta^{(m_s)}_{\mathbf{m}} )
& 
= 
\int\limits_{ \Delta^{(m_s)}_{\mathbf{m}} } 
W_{\mn} \1 d \tau  
= 
\int\limits_{ \Delta^{(m_s)}_{\mathbf{m}} } 
R_{ 2m_s \textbf{1} }  
W_{ 2^{m_s} \mathbf{p} }  
W_{\mathbf{q}} \, d \tau  
\\ 
& 
= 
W_{ \mathbf{q} \mathbf{m} }^{(m_s)}
\sum\limits_{ \mathbf{m}^{\prime} < 2^{m_s} \mathbf{1} } 
\int\limits_{ \Delta_{ 2^{m_s} \mathbf{m} + \mathbf{m}^{\prime} }^{(2m_s)} } 
R_{ 2m_s \textbf{1} } W_{ 2^{m_s} \mathbf{p} } \, d \tau  
\\ 
& 
= 
W_{ \mathbf{q} \mathbf{m} }^{(m_s)}
\sum\limits_{ \mathbf{m}^{\prime} < 2^{m_s} \mathbf{1} } 
W_{ \mathbf{p} \mathbf{m}^{\prime} }^{(m_s)} 
\sum\limits_{ \boldsymbol{\sigma} \in \{ 0, 1 \}^d }
S_{\boldsymbol{\sigma}}, 
\\ 
S_{\boldsymbol{\sigma}}
& 
:= 
R_{ 2m_s \textbf{1} } 
( \Delta_{ 2^{m_s+1} \mathbf{m} + 2\mathbf{m}^{\prime} + \boldsymbol{\sigma} }^{(2m_s+1)} ) 
\tau 
( \Delta_{ 2^{m_s+1} \mathbf{m} + 2\mathbf{m}^{\prime} + \boldsymbol{\sigma} }^{(2m_s+1)} ).  
\end{split}
\end{equation}

If $\Delta^{(m_s)}_{\mathbf{m}}$ does not belong to $\widetilde{F}_{s-1}$, 
then all $\Delta_{ 2^{m_s+1} \mathbf{m} + 2\mathbf{m}^{\prime} + \boldsymbol{\sigma} }^{(2m_s+1)}$ 
also do not belong, and therefore all
$\tau ( \Delta_{ 2^{m_s+1} \mathbf{m} + 2\mathbf{m}^{\prime} + \boldsymbol{\sigma} }^{(2m_s+1)} ) = 0$ 
and $S_{\boldsymbol{\sigma}} = 0$. 
If, however,  $\Delta^{(m_s)}_{\mathbf{m}} \subset \widetilde{F}_{s-1}$, 
then 
\[ 
\tau 
( \Delta_{ 2^{m_s+1} \mathbf{m} + 2\mathbf{m}^{\prime} + \boldsymbol{\sigma} }^{(2m_s+1)} ) 
= 0 
\] 
for half of the vectors $\boldsymbol{\sigma} \in \{ 0, 1 \}^d$, and
\begin{equation} 
\label{Eq:Tau-Coeff-02-2} 
R_{ 2m_s \textbf{1} } 
( \Delta_{ 2^{m_s+1} \mathbf{m} + 2\mathbf{m}^{\prime} + \boldsymbol{\sigma} }^{(2m_s+1)} ) 
\stackrel{\eqref{Eq:Tau-Coeff-001}}{=}
W_{ \mathbf{m} \mathbf{m}^{\prime} }^{(m_s)} 
\end{equation}  
and
\[   
\tau 
( \Delta_{ 2^{m_s+1} \mathbf{m} + 2\mathbf{m}^{\prime} + \boldsymbol{\sigma} }^{(2m_s+1)} )= 
2^{s - d (2m_s+1)}      
\]  
--- for the other half. 
In view of the above,
\begin{equation} 
\label{Eq:Tau-Coeff-12}  
\begin{split}
\widehat{\tau}_{\mn} 
( \Delta^{(m_s)}_{\mathbf{m}} ) 
& 
= 
2^{s- d (2m_s+1)} 2^{d-1}
W_{ \mathbf{q} \mathbf{m} }^{(m_s)}
\sum\limits_{ \mathbf{m}^{\prime} < 2^{m_s} \mathbf{1} }
W_{ \mathbf{p} \mathbf{m}^{\prime} }^{(m_s)}
W_{ \mathbf{m} \mathbf{m}^{\prime} }^{(m_s)}  
\\ 
& 
= 
2^{s - 1 - 2 d m_s} 
W_{ \mathbf{q} \mathbf{m} }^{(m_s)} 
\sum\limits_{ \mathbf{m}^{\prime} < 2^{m_s} \mathbf{1} }
W_{ \mathbf{p} \mathbf{m}^{\prime} }^{(m_s)} 
W_{ \mathbf{m} \mathbf{m}^{\prime} }^{(m_s)}  
\\ 
& 
\stackrel{\eqref{Eq:Tau-Coeff-03}}{=}
2^{s - 1 - 2 d m_s} 
W_{ \mathbf{q} \mathbf{m} }^{(m_s)} 
2^{dm_s} 
\de_{\mathbf{m}}^{\mathbf{p}} 
\stackrel{\eqref{Eq:Tau-Coeff-03}}{=}
2^{s - 1 - d m_s} 
W_{ \mathbf{q} \mathbf{m} }^{(m_s)} 
\de_{\mathbf{m}}^{\mathbf{p}}.
\end{split} 
\end{equation} 
From the above reasoning, we obtain formula \eqref{Eq:Tau-Coeff-001} 
that implies \eqref{Eq:Tau-Coeff-01}: 
\begin{equation} 
\label{Eq:Tau-Coeff-14}  
\begin{split}
\widehat{\tau}_{\mn}  
& 
\stackrel{\eqref{Eq:Loc-FC}}{=}
\sum\limits_{ \mathbf{m} < 2^{m_s} \mathbf{1} } 
\widehat{\tau}_{\mn} ( \Delta^{(m_s)}_{\mathbf{m}} ) 
\\ 
& 
\stackrel{\eqref{Eq:Tau-Coeff-12}}{=}
2^{s - 1 - d m_s} 
\sum\limits_{ \mathbf{m} < 2^{m_s} \mathbf{1} } 
W_{ \mathbf{q} \mathbf{m} }^{(m_s)} 
\de_{\mathbf{m}}^{\mathbf{p}}
= 
2^{s - 1 - d m_s} 
W_{ \mathbf{q} \mathbf{p} }^{(m_s)}. 
\end{split} 
\end{equation} 
The lemma is proved.
\end{proof} 


\begin{rem} 
\label{Rem:2-1} Let $\boldsymbol{\pi} \in S$.
If the vector $\mathbf{n}$ has the form \eqref{Eq:2}, 
then 
\begin{equation} 
\label{Eq:Tau-Coeff-15} 
( \widehat{ \tau^{\boldsymbol{\pi}} } )_{\mn} ( \Delta^{(m_s)} )_{\mathbf{m}}   
= 
2^{s - 1 - d m_s}
W_{ \mathbf{q} \mathbf{m} }^{(m_s)} 
\de_{ \boldsymbol{\pi}_s ( \mathbf{m} ) }^{\mathbf{p}} 
\mathrm{I} ( \Delta_{\mathbf{m}}^{(m_s)} 
\subset \widetilde{F}_{s-1} ),  
\end{equation}  
the Fourier--Walsh coefficients of the quasi-measure $\tau^{\boldsymbol{\pi}}$ are given by  
\begin{equation} 
\label{Eq:Tau-Coeff-16} 
(\widehat{\tau^{\boldsymbol{\pi}}})_{\mn} 
= 
2^{s - 1 - d m_s} 
W_{\mathbf{q} \mathbf{\boldsymbol{\pi}^{-1}_s(p)} }^{(m_s)}  
\, 
\mathrm{I}( \Delta^{(m_s)}_{ \mathbf{\boldsymbol{\pi}^{-1}_s(p)} } 
\subset \widetilde{F}_{s-1} ).
\end{equation} 

Indeed, let us repeat the proof of Lemma~\ref{Lem:2-1}, 
replacing $\tau$ with $\tau^{\boldsymbol{\pi}}$ 
throughout, and 
$W_{ \mathbf{m} \mathbf{m}^{\prime} }^{(m_s)}$ 
with $W_{ \boldsymbol{\pi}^{-1}_s ( \mathbf{m} ) \mathbf{m}^{\prime} }^{(m_s)}$ 
in formulas \eqref{Eq:Tau-Coeff-02-2} and \eqref{Eq:Tau-Coeff-12}. 
This will yield formula \eqref{Eq:Tau-Coeff-15}, 
from which, via a chain of inequalities analogous to \eqref{Eq:Tau-Coeff-14} 
we obtain formula \eqref{Eq:Tau-Coeff-16}.     
\end{rem}


\begin{lem} 
\label{Lem:2-1-1} 
Suppose that the vector $\mathbf{n}$ does not belong to any of the binary blocks $B_{2m_s}$. 
Then $\widehat{\tau}_{\mathbf{0}} = 1$ 
and $\widehat{\tau}_{\mathbf{n}} = 0$ for $\mathbf{n} \neq \mathbf{0}$. 

Similar equalities hold for  
the Fourier--Walsh coefficients of all quasi-measures $\tau^{\boldsymbol{\pi}}$, 
$\boldsymbol{\pi} \in S$.  
\end{lem} 

\begin{proof} 
We have $\widehat{\tau}_{\mathbf{0}} 
= W_{\mathbf{0}} ( \Delta^{(0)}_{\mathbf{0}} ) 
\tau ( \Delta^{(0)}_{\mathbf{0}} ) = 1 \cdot 1 = 1$.  

Furthermore, let $\mathbf{n} \neq \mathbf{0}$ 
and assume the vector $\mn \notin \bigcup_{s \in \mathbb{N}} B_{2m_s}$. 
Let $k \in \mathbb{N}$ be the integer satisfying $\max \{ n^1, \ldots, n^d \} \in [ 2^{ k }, 2^{k+1} )$, then 
\begin{equation}
\label{Eq:Rec-Q-M-30} 
\begin{split}
\widehat{\tau}_{\mn} 
& 
\stackrel{\eqref{Eq:Rec-Q-M}}{=} 
\sum\limits_{ \mathbf{m} < 2^{k+1} \mathbf{1} } 
W_{ \mathbf{n} }  
\big( \Delta^{(k+1)}_{\mathbf{m}} \big) 
\tau 
\big( \Delta^{(k+1)}_{\mathbf{m}} \big)
\\ 
&
= 
\sum\limits_{ \mathbf{m} < 2^k \mathbf{1} } 
\sum\limits_{ \boldsymbol{\sigma} \in \{ 0, 1 \}^d }  
W_{ \mathbf{n} }  
\big( \Delta^{(k+1)}_{ 2 \mathbf{m} + \boldsymbol{\sigma} } \big) 
\tau 
\big( \Delta^{(k+1)}_{ 2 \mathbf{m} + \boldsymbol{\sigma} } \big).  
\end{split} 
\end{equation}
Fix $\mathbf{m}$ and compute the inner sum $S_{\mathbf{m}}$  
on the right-hand side of \eqref{Eq:Rec-Q-M-30}.

Consider three possible cases.  

In the first case $\Delta^{(k)}_{\mathbf{m}}$ does not intersect  
$F$, and consequently all $\Delta^{(k+1)}_{ 2 \mathbf{m} + \boldsymbol{\sigma} }$ don't either.
Therefore, in this case,  
$\tau \big( \Delta^{(k+1)}_{ 2 \mathbf{m} + \boldsymbol{\sigma} } \big) = 0$ 
for all $\boldsymbol{\sigma}$ и $S_{\mathbf{m}} = 0$.   

In the second case
$\Delta^{(k+1)}_{ 2 \mathbf{m} + \boldsymbol{\sigma} } \cap F \ne \emptyset$ 
for all $\boldsymbol{\sigma}$. 
Then all values 
$\tau \big( \Delta^{(k+1)}_{ 2 \mathbf{m} + \boldsymbol{\sigma} } \big)$,  
$\boldsymbol{\sigma} \in \{ 0, 1 \}^d$, 
are identical. 
The properties of Walsh functions imply that the values  
$W_{\mathbf{n}}\big(\Delta^{(k+1)}_{2\mathbf{m} + \boldsymbol{\sigma}}\big)$  
take $\pm 1$ equally often  
as $\boldsymbol{\sigma}$ ranges over $\{0,1\}^d$.
And in this case $S_{\mathbf{m}} = 0$.

In the third case, $\Delta^{(k+1)}_{ 2 \mathbf{m} + \boldsymbol{\sigma} } \cap F \ne \emptyset$  holds  
for some $\boldsymbol{\sigma} \in \{0,1\}^d$, but not for all.  
This is only possible when $k = 2m(s)$ for some $s$.  
Since the vector $\mathbf{n}$ does not have the form \eqref{Eq:2},  
there exist indices $n^l \notin [2^{2m(s)}, 2^{2m(s)+1})$  
for all $l$ in some index set $A \subset 1:d$  
with cardinality $L \geq 1$. 
We partition the terms of the inner sum on the right-hand side of \eqref{Eq:Rec-Q-M-30}  
into $2^{d-L}$ blocks of $2^L$ terms each,  
such that the vectors $\boldsymbol{\sigma}$ corresponding to  
terms within the same block differ only  
in coordinates indexed by $l \in A$.

Then the values of $W_{\mathbf{n}}\big(\Delta^{(k(s)+1)}_{ 2 \mathbf{m} + \boldsymbol{\sigma}} \big)$
are identical across all terms from the same block,
and for different blocks they take the values $\pm 1$  
the same number of times.

Furthermore, $2^{d-1}$ of the values  
$\tau ( \Delta^{( k(s)+1)}_{ 2 \mathbf{m} + \boldsymbol{\sigma} } )$ equal zero, while the remaining equal some constant $a$,  
with each block containing an same number of values $a$.
 
Hence we get that in this case $S_{\mathbf{m}} = 0$ and 
therefore $\widehat{\tau}_{\mathbf{n}} = 0$.

The preceding arguments remain valid when replacing $\tau$ with $\tau^{\boldsymbol{\pi}}$. 
The proof of the lemma is now complete.
\end{proof}


\begin{lem} 
\label{Lem:2-2}
If $\mn < 2^{2m_s+1} \mathbf{1}$, 
but $\mn \notin B_{2m_s}$ and $\mn \notin ( 0 : 2^{m_s} - 1 )^d$, 
then $\widehat{\tau}_{\mn} ( \Delta^{(m_s)}_{\mathbf{l}} ) = 0$ 
for all $\mathbf{l}$. 

The analogous equality holds for the Fourier--Walsh coefficients 
of all quasi-measures $\tau^{\boldsymbol{\pi}}$, 
$\boldsymbol{\pi} \in S$.  
\end{lem} 

\begin{proof} 
Let us take the natural number $k$ such that $\max \{ n^1, \ldots, n^d \} \in [ 2^{ k }, 2^{k+1} )$ 
and consider the following formula analogous to \eqref{Eq:Rec-Q-M-30}:
\[
\widehat{\tau}_{\mn} ( \Delta^{(m_s)}_{\mathbf{l}} ) 
= 
\sum\limits 
\sum\limits_{ \boldsymbol{\sigma} \in \{ 0, 1 \}^d }  
W_{ \mathbf{n} }  
\big( \Delta^{(k+1)}_{ 2 \mathbf{m} + \boldsymbol{\sigma} } \big) 
\tau 
\big( \Delta^{(k+1)}_{ 2 \mathbf{m} + \boldsymbol{\sigma} } \big),   
\] 
the outer summation is taken over all $\mathbf{m} < 2^k \mathbf{1}$ 
satisfying $\Delta^{(k)}_{\mathbf{m}} \subset \Delta^{(m_s)}_{\mathbf{l}}$;  
this summation is well-defined because
$\mn \notin ( 0 : 2^{m_s} - 1 )^d$ and therefore $k \ge m_s$. 
The remainder of the proof follows from exactly the same reasoning as in the proof of Lemma~\ref{Lem:2-1-1}. 
Thus, the lemma is proved.
\end{proof}


\begin{lem} 
\label{Lem:2-3}
If $\mn < 2^{2m_s+1} \mathbf{1}$ 
and $\Delta^{(m_s)} \subset \widetilde{F}_{s-1}$, 
then 
\[ 
\begin{split}
\int\limits_{ F_s \cap \Delta^{(m_s)} } W_{\mn} \, d \mu 
& 
= 
2^{-s} 
\int\limits_{ \Delta^{(m_s)} } W_{\mn} \, d \tau 
\\ 
& 
= 
2^{-s} 
\widehat{\tau}_{\mn} ( \Delta^{(m_s)} ). 
\end{split}
\] 
The statement remains valid when replacing $\widetilde{F}_{s-1}$ 
with $\widetilde{F}^{\boldsymbol{\pi}}_{s-1}$
and $\tau$ with $\tau^{\boldsymbol{\pi}}$ for any permutation 
$\boldsymbol{\pi} \in S$.  
\end{lem}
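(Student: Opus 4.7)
The plan is direct: both integrals decompose as finite sums over the dyadic subcubes of rank $2m_s+1$ contained in $\Delta^{(m_s)}$, and the hypothesis $\Delta^{(m_s)} \subset \widetilde{F}_{s-1}$ forces the weights coming from $\tau$ and from $\mu$ to agree up to the factor $2^s$.

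First, since $\mn < 2^{2m_s+1}\mathbf{1}$, the function $W_{\mn}$ is constant on every dyadic cube of rank $2m_s+1$. Partitioning $\Delta^{(m_s)}$ into such subcubes and using additivity of $\tau$ and $\mu$ on the semi-ring of dyadic cubes, I would write
\[
\int_{\Delta^{(m_s)}} W_{\mn}\, d\tau \;=\; \sum_{\Delta^{(2m_s+1)} \subset \Delta^{(m_s)}} W_{\mn}(\Delta^{(2m_s+1)})\, \tau(\Delta^{(2m_s+1)}),
\]
and, since $F_s \cap \Delta^{(m_s)}$ is a disjoint union of rank-$(2m_s+1)$ dyadic cubes,
\[
\int_{F_s \cap \Delta^{(m_s)}} W_{\mn}\, d\mu \;=\; \sum_{\Delta^{(2m_s+1)} \subset F_s \cap \Delta^{(m_s)}} W_{\mn}(\Delta^{(2m_s+1)})\, \mu(\Delta^{(2m_s+1)}).
\]

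Next, since $\Delta^{(m_s)} \subset \widetilde{F}_{s-1}$, every rank-$(2m_s+1)$ subcube inherits this containment, so formula \eqref{Eq:Tau-Coeff-03} applies and gives $\tau(\Delta^{(2m_s+1)}) = 2^s \mu(\Delta^{(2m_s+1)})$ when $\Delta^{(2m_s+1)} \subset F_s$ and $\tau(\Delta^{(2m_s+1)}) = 0$ otherwise. Substituting into the first display and comparing term-by-term with the second yields
\[
\int_{\Delta^{(m_s)}} W_{\mn}\, d\tau \;=\; 2^s \int_{F_s \cap \Delta^{(m_s)}} W_{\mn}\, d\mu,
\]
and the first claimed equality follows on division by $2^s$; the second equality is merely the definition of $\widehat{\tau}_{\mn}(\Delta^{(m_s)})$.

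For the $\tau^{\boldsymbol{\pi}}$-variant with $\boldsymbol{\pi} \in S$, the identical argument works with $F_s^{\boldsymbol{\pi}}$ and $\widetilde{F}_{s-1}^{\boldsymbol{\pi}}$ in place of $F_s$ and $\widetilde{F}_{s-1}$, since (as noted in the text following \eqref{Eq:Tau-Coeff-03}) the analogues of the defining formulas for $\tau$ carry over verbatim to $\tau^{\boldsymbol{\pi}}$. There is no genuine obstacle here; the only point worth verifying is that every rank-$(2m_s+1)$ subcube of $\Delta^{(m_s)}$ either lies entirely in $F_s$ or is disjoint from it, which is immediate from the definition \eqref{Eq:Tau-Coeff-001}.
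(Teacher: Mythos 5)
Your proof is correct and follows essentially the same route as the paper: both decompose the integrals into sums over the rank-$(2m_s+1)$ dyadic subcubes of $\Delta^{(m_s)}$, on which $W_{\mn}$ is constant, apply \eqref{Eq:Tau-Coeff-03} to convert between $\mu$ and $2^{-s}\tau$, and use the vanishing of $\tau$ on cubes disjoint from $F_s$ to pass between the sum over $F_s \cap \Delta^{(m_s)}$ and the full integral over $\Delta^{(m_s)}$. Your explicit remark that the hypothesis $\Delta^{(m_s)} \subset \widetilde{F}_{s-1}$ is what licenses the application of \eqref{Eq:Tau-Coeff-03} is a welcome (and slightly more careful) addition.
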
 

\begin{proof} 
We have:
\[ 
\begin{split}
\int\limits_{ F_s \cap \Delta^{(m_s)} } W_{\mn} \, d \mu 
& 
= 
\sum\limits_{ \Delta^{(2m_s+1)} \subset F_s \cap \Delta^{(m_s)} } 
W_{\mn} ( \Delta^{(2m_s+1)} ) \mu ( \Delta^{(2m_s+1)} ) 
\\ 
& 
\stackrel{\eqref{Eq:Tau-Coeff-03}}{=} 
2^{-s} 
\sum\limits_{ \Delta^{(2m_s+1)} \subset F_s \cap \Delta^{(m_s)} } 
W_{\mn} ( \Delta^{(2m_s+1)} ) \tau ( \Delta^{(2m_s+1)} )
\\ 
& 
= 
2^{-s}
\sum\limits_{ \Delta^{(2m_s+1)} \subset F_s \cap \Delta^{(m_s)} } 
\int\limits_{ F_s \cap \Delta^{(m_s)} } W_{\mn} \, d \tau 
\\ 
& 
\stackrel{\text{?}}{=} 
2^{-s}
\int\limits_{ \Delta^{(m_s)} } W_{\mn} \, d \tau 
= 
2^{-s}
\widehat{\tau}_{\mn} ( \Delta^{(m_s)} ). 
\end{split} 
\] 
In the transition marked by ``?'', 
we used the fact that 
$\tau(\Delta) = 0$ 
for all dyadic cubes $\Delta \subset \mG^d \setminus F_s$. 

If we replace $\widetilde{F}_{s-1}$ with $\widetilde{F}^{\boldsymbol{\pi}}_{s-1}$ and $\tau$ with $\tau^{\boldsymbol{\pi}}$ for $\boldsymbol{\pi} \in S$, the proof is carried out similarly.
\end{proof} 


\begin{rem} 
\label{Rem:2-3} 
Lemmas~\ref{Lem:2-1}--\ref{Lem:2-3} and Remark~\ref{Rem:2-1} 
can be adapted, with minor modifications,  
to the case where instead of quasi-measures $\tau = \tau_F$ 
and $\tau^{\boldsymbol{\pi}} = \tau_{F^{\boldsymbol{\pi}}}$, 
we consider their restrictions 
$\psi := \tau|_{\Delta^{m_{s_0}}}$ and $\psi^{\boldsymbol{\pi}} := \tau^{\boldsymbol{\pi}}|_{\Delta^{m_{s_0}}}$ 
to a fixed dyadic cube $\Delta^{m_{s_0}}$. Taking into account the remarks after equations \eqref{Eq:Tau-Coeff-01} and \eqref{Eq:Tau-Coeff-03}, we obtain the following.

If the vector $\mathbf{n}$ has the form \eqref{Eq:2}, 
where $s > s_0$ (this condition ensures 
that all dyadic cubes appearing in the proof 
remain within $\Delta^{m_{s_0}}$), then 
\begin{equation} 
\label{Eq:Tau-Coeff-000-mod} 
\widehat{\psi}_{\mn} ( \Delta^{(m_s)}_{\mathbf{m}} )  
= 
2^{s - 1 - d m_s}
W_{ \mathbf{q} \mathbf{m} }^{(m_s)} 
\de_{\mathbf{m}}^{\mathbf{p}} 
\mathrm{I} ( \Delta_{ \mathbf{m} }^{(m_s)} \subset \widetilde{F}_{s-1} 
\cap \Delta^{m_{s_0}}),  
\end{equation} 

\begin{equation} 
\label{Eq:Tau-Coeff-0-mod} 
\widehat{\psi}_{\mn} 
= 
2^{s - 1 - d m_s} 
W_{\mathbf{q} \mathbf{p}}^{(m_s)}  
\, 
\mathrm{I}( \Delta^{(m_s)}_{\mathbf{p}} 
\subset 
\widetilde{F}_{s-1} \cap \Delta^{m_{s_0}} ) 
\end{equation} 
(compared to the right-hand sides of \eqref{Eq:Tau-Coeff-000} and \eqref{Eq:Tau-Coeff-0}, 
only the intersection with $\Delta^{m_{s_0}}$ appeared).  Taking into account Remark~\ref{Rem:2-1}, 
similar formulas can be derived for 
the quasi-measures $\psi^{\boldsymbol{\pi}}$.

Furthermore, for the restricted quasi-measures, the statements of Lemmas~\ref{Lem:2-1-1} and~\ref{Lem:2-2} remain valid provided that $\max \{ n^1, \ldots, n^d \} > 2^{2m_{s_0}}$ 
и $s > s_0$. 
Under the same conditions, the statement of Lemma~\ref{Lem:2-3} remains valid if we additionally consider the intersections of the sets $\widetilde{F}_{s-1}$ and $\widetilde{F}^{\boldsymbol{\pi}}_{s-1}$ with $\Delta^{m_{s_0}}$ instead of the original sets. 
\end{rem}


\section{Main results}
\label{S:Main-results}


\begin{theorem} 
\label{T:Main-1}
The set $F$ defined by \eqref{Eq:Tau-Coeff-001} and \eqref{Eq:Tau-Coeff-002},
is an $M$-set for the $d$-dim Walsh system under convergence over rectangles, 
cubes, or under iterated convergence for any order of summation. 
In this case, the $d$-multiple Walsh series 
$\sum_{\mathbf{n} = \mathbf{0}}^\infty \tau_{\mathbf{n}} W_{\mathbf{n}}(\mathbf{g})$ 
generating the quasi-measure $\tau := \tau_F$ constructed from the set $F$ 
in item.~\ref{Subsubs:Tau-F},
is a null-series that realizing the $M$-set $F$.
\end{theorem}

\begin{proof}
Let us immediately note that the coefficients $\tau_{\mathbf{n}}$ 
of this Walsh series coincide with 
the Walsh--Fourier coefficients $\widehat{\tau}_{\mathbf{n}}$ 
of the quasi-measure $\tau$. 

Fix an arbitrary point $\mg \in \mG^d \setminus F$. 
We need to prove that  
\begin{equation}
\label{Eq:WSer-PS-2} 
\lim S_{\mN} ( \mathbf{g} ) = 0 
\;\; \text{as $\min \{ N^1, \ldots, N^d \} \to \infty$}, 
\end{equation}
Since the indices of all nonzero coefficients $\tau_{\mathbf{n}}$  
lie in one of the diagonal binary blocks 
$B_{2 m_s} 
:= 
\big\{ 
    2^{2 m_s} \mathbf{1} \le \mn < 2^{2 m_s + 1} \mathbf{1}   
\big\}$,  
it suffices to consider in \eqref{Eq:WSer-PS-2} 
only vectors $\mathbf{N}$ belonging to $B_{2^{m_s}}$.
Corollary~\ref{Cor:Main-4} guarantees  
the existence of a natural number  
$s_0$ such that
\begin{equation}
\label{Eq:WSer-PS-3} 
S_{2^{m_s} \mathbf{M}} ( \mathbf{g} ) 
=
S_{2^{\frac{m_s}{2}} \mathbf{M}} ( \mathbf{g} ) 
= 0, 
\;\; \text{if $s \ge s_0$ и $\mathbf{M} \in \mbN^d$}   
\end{equation} 
(recall that $m_s/2 = 2m_{s-1}+1$). 

For $s \geq s_0$, we write the vector $\mathbf{N} \in B_{2^{m_s}}$ in the form 
\[ 
\mN 
= 
2^{2m_s} \mathbf{1}
+ 
2^{m_s} \mathbf{M} 
+ 
2^{\frac{m_s}{2}} \mathbf{L} 
+
\mathbf{K}, 
\quad 
\mathbf{M} < 2^{m_s} \mathbf{1}, 
\quad 
\mathbf{L} < 2^{\frac{m_s}{2}} \mathbf{1}, 
\quad 
\mathbf{K} < 2^{\frac{m_s}{2}} \mathbf{1}.  
\]
We have 
\begin{equation} 
\label{Eq:E}
S_{\mathbf{N}} ( \mathbf{g} ) 
= 
S_{\mathbf{N}^\prime} ( \mathbf{g} ) 
+ 
E 
\stackrel{\eqref{Eq:WSer-PS-3}}{=} 
E, 
\qquad
E 
:= 
S_{\mathbf{N}} ( \mathbf{g} ) 
- 
S_{\mathbf{N}^\prime} ( \mathbf{g} ),      
\end{equation}
where $\mathbf{N}^\prime := 2^{2m_s} \mathbf{1}
+ 2^{m_s} \mathbf{M} + 2^{\frac{m_s}{2}} \mathbf{L}$. 

Let us estimate $|E|$ from above. 
The quantity $E$ can be expressed as follows:
\begin{equation}
\label{Eq:WSer-PS-4} 
E 
= 
\sum\limits_{ \boldsymbol{\sigma} \in \{ 0, 1 \}^d \setminus \{ \mathbf{0}, \mathbf{1} \} }
\sum\limits_{ \mathbf{n} \in W_{\boldsymbol{\sigma}} }
\tau_{\mn} W_{\mn} ( \mg ) + \sum\limits_{ \mathbf{n} \in V }
\tau_{\mn} W_{\mn} ( \mg ).  
\end{equation} 
Here $W_{\boldsymbol{\sigma}}$ consists of all $\mathbf{n} \in B_{2^{m_s}}$ such that
\[ 
\begin{split}
2^{2m_s} + 2^{m_s} M^j + 2^{\frac{m_s}{2}} L^j 
\le 
n^j 
& 
< 
N^j  
\quad 
\text{if $\sigma^j = 1$},  
\\ 
2^{2m_s} 
\le
n^j 
& 
< 
2^{2m_s} + 2^{m_s} M^j 
\quad 
\text{if $\sigma^j = 0$},  
\end{split} 
\] 
while $V =  W_{\mathbf{1}} 
\bigsqcup 
\bigsqcup\limits_{\boldsymbol{\sigma} \in \{ 0,1 \}^d \setminus \{ \mathbf{0} \} } \widetilde{W}_{\boldsymbol{\sigma}}$, 
where $\widetilde{W}_{\boldsymbol{\sigma}}$ consists of all $\mathbf{n} \in B_{2 m_s}$ such that 
\[ 
\begin{split}
2^{2m_s} + 2^{m_s} M^j + 2^{\frac{m_s}{2}} L^j
\le 
n^j 
& 
< 
N^j
\quad 
\text{if $\sigma^j = 1$},  
\\ 
2^{2m_s} + 2^{m_s} M^j 
\le 
n^j 
& 
< 2^{2m_s} + 2^{m_s} M^j + 2^{\frac{m_s}{2}} L^j  
\quad 
\text{if $\sigma^j = 0$}.   
\end{split}
\] 

Let us estimate the cardinality of the set $V$: 
\begin{equation}
\label{Eq:WSer-PS-5-1}  
\begin{split} 
\# V 
= 
\# W_{\mathbf{1}} 
+ 
\sum\limits_{ \boldsymbol{\sigma} \in \{ 0,1 \}^d \setminus \{ \mathbf{0}, \mathbf{1} \} } 
\# \widetilde{W}_{\boldsymbol{\sigma}} 
& 
\le 
2^{d \frac{m_s}{2}} 
+ 
( 2^d - 2 ) 2^{\frac{m_s}{2} + (d-1)m_s} 
\\ 
& 
\le 
2^{d + \frac{m_s}{2} + (d-1)m_s }. 
\end{split} 
\end{equation} 
Since $V \in B_{2 m_s}$, 
the last sum in  \eqref{Eq:WSer-PS-4} 
can be estimated as follows: 
\begin{equation}
\label{Eq:WSer-PS-5} 
\begin{split}
\bigg| 
    \sum\limits_{ \mathbf{n} \in V } 
    \tau_{\mn} W_{\mn} ( \mg )  
\bigg| 
\le 
\# V  
\cdot 
\max\limits_{ \mathbf{n} \in B_{2 m_s} } 
| \tau_{\mn} | 
& 
\stackrel{\eqref{Eq:WSer-PS-5-1}, \, \eqref{Eq:Tau-Coeff-0}}{\le} 
2^{d + \frac{m_s}{2} + (d-1)m_s + 1} 
\cdot 
2^{ s - 1 - d m_s }  
\\ 
& 
= 
2^{d+s - \frac{m_s}{2} - 1 }. 
\end{split} 
\end{equation}  

Now fix an arbitrary $\boldsymbol{\sigma} 
\in \{ 0, 1 \}^d \setminus \{ \mathbf{0}, \mathbf{1} \}$ 
and estimate the sum $\sum_{ \mathbf{n} \in W_{\boldsymbol{\sigma}} }$ in \eqref{Eq:WSer-PS-4}. 
We denote $A_1 := \{ j \colon \sigma^j = 1 \}$ and $A_0 := \{ j \colon \sigma^j = 0 \}$. 
We divide the coordinates of each $d$-dimensional vector into two groups collecting in the natural order 

- in the first of them coordinates with indices from $A_1$,

- in the second --- from $A_0$. 

We mark vectors formed by first-group coordinates with a superscript $*$ and those formed by second-group coordinates with a subscript $*$.
In the new notation, the vector $\mathbf{n} \in W_{\boldsymbol{\sigma}}$ 
is $( \mathbf{n}^*, \mathbf{n}_* )$, where:  
\[ 
\mathbf{n}^* 
= 
2^{2m_s} \mathbf{1}^*
+ 
2^{m_s} \mathbf{M}^* 
+ 
2^{\frac{m_s}{2}} \mathbf{L}^* 
+ 
\mathbf{k}^*, 
\quad 
\mathbf{k}^* < \mathbf{K}^*;
\]
\[
\mathbf{n}_* 
= 
2^{2m_s} \mathbf{1}_*
+ 
2^{m_s} \mathbf{m}_* 
+ 
\mathbf{q}_*, 
\quad 
\mathbf{m}_* 
< 
\mathbf{M}_*, 
\quad 
\mathbf{q}_* 
< 
2^{m_s} \mathbf{1}.
\]
In the new notation, we write the sum 
$\sum\limits_{ \mathbf{n} \in W_{\boldsymbol{\sigma}} }
\tau_{\mn} W_{\mn} ( \mg )$, 
with coefficients taken from \eqref{Eq:Tau-Coeff-0}: 
\begin{equation} 
\label{Eq:BigSum}
\begin{split}
\sum\limits_{ \mathbf{n} \in W_{\boldsymbol{\sigma}} }
\tau_{\mn} W_{\mn} ( \mg ) 
= 
& 
2^{s - 1 - dm_s} R_{2m_s \mathbf{1}} (\mg)  
W_{ 2^{m_s} \mathbf{M}^* + 2^{\frac{m_s}{2}} \mathbf{L}^* } ( \mg^* ) 
\cdot I_1 \cdot I_2 \cdot I_3, 
\\ 
I_1
:= 
& 
\sum_{ \mathbf{k}^* < \mathbf{K}^* } 
W_{ \mathbf{q}^* \mathbf{M}^* }^{(m_s)} 
W_{ \mathbf{q}^* } ( \mg^* ), 
\\ 
I_2
:= 
&
\sum_{ \mathbf{m}_* < \mathbf{M}_* } 
W_{ \mathbf{m}_* } ( \mg_* ) 
\sum_{ \mathbf{q}_* < 2^{m_s} \mathbf{1}_* } 
W_{ \mq_* \mathbf{m}_* }^{(m_s)} 
W_{ \mq_* } ( \mg_* ),  
\\ 
I_3
:= 
& 
\mathrm{I} 
( \Delta^{(m_s)}_{ ( \mathbf{M}^*, \mathbf{m}_* ) } 
\subset \widetilde{F}_{s-1} ). 
\end{split}
\end{equation}
We estimate the values $| I_1 |$, $| I_2 |$ и $| I_3 |$ from above. 

Obviously, $| I_3 | \le 1$. 
Further, since $\mathbf{K}^* < 2^{\frac{m_s}{2} } \mathbf{1}^*$, 
the sum in $I_1$ contains at most $2^{\frac{m_s}{2} \# A_1}$ terms of $\pm 1$, 
whence $|I_1| \leq 2^{\frac{m_s}{2} \# A_1}$.

Let us take the vector  $\mathbf{r}_*$ such that  
$\Delta^{(m_s)}_{\mathbf{r}_*} \ni \mg_*$  
and then we get $W_{ \mq_* } ( \mg_* ) = W_{ \mq_* \mathbf{p}_* }^{(m_s)}$,  
\begin{equation} 
\label{Eq:I2}
\sum_{ \mathbf{q}_* < 2^{m_s} \mathbf{1}_* } 
W_{ \mq_* \mathbf{m}_* }^{(m_s)} 
W_{ \mq_* \mathbf{r}_* }^{(m_s)} 
\stackrel{\eqref{Eq:Walsh-Matrix-0}}{=} 
2^{m_s \# A_0} 
\delta^{ \mathbf{r}_* }_{ \mathbf{m}_* }, 
\qquad 
I_2 = 2^{m_s \# A_0} W_{ \mathbf{r}_* } ( \mg_* ), 
\end{equation} 
hence $| I_2 | = 2^{m_s \# A_0}$.  

Putting together \eqref{Eq:WSer-PS-4}, \eqref{Eq:WSer-PS-5}, and \eqref{Eq:BigSum},  
as well as the estimates for $| I_1 |$, $| I_2 |$ and $| I_3 |$, we obtain 
\[ 
\begin{split}
| E | 
& 
\le 
2^{d+s-1 - \frac{m_s}{2} } 
+ 
( 2^d-2 ) 
\cdot 
2^{s - 1 - dm_s}
\cdot 
2^{\frac{m_s}{2} \# A_1 } 
\cdot 
2^{m_s \# A_0}  
\\ 
& 
\le 
2^{d + s - \frac{m_s}{2} } \to 0 
\quad 
\text{as $s \to \infty$}   
\end{split} 
\]
(we used the fact that $\# A_1 + \# A_0 = d$ and $\# A_1 \ge 1$). 
Consequently, taking into account \eqref{Eq:E}, 
we get $S_{\mathbf{N}} ( \mathbf{g} ) \rightarrow 0$ 
as $\min N^i \to \infty$. 
Thus, the series $\sum\limits_{ \mn = \mathbf{0} }^\infty
\tau_{\mn} W_{\mn} ( \mg )$ converges to zero over rectangles for all $\mathbf{g} \in \mG^d \setminus F$. 
Consequently, for the same $\mathbf{g}$, it converges to zero over cubes. 
Since the indices of all nonzero coefficients of this series are located 
in the binary blocks $B_k$, 
at the points $\mathbf{g} \in \mG^d \setminus F$, the convergence to zero over rectangles, 
together with Proposition~\ref{Prop:WSer-Num}, 
yields iterated convergence to zero at these points for any order of iterated summation.
The proof is complete. 
\end{proof}

\begin{theorem} 
\label{T:Main-1-1}
Let the set $F^{\boldsymbol{\pi}}$ be defined by formulas 
\eqref{Eq:Fspi}--\eqref{Eq:Fpi}, 
where $\boldsymbol{\pi} \in S^{\prime}$ 
$($recall that $S^{\prime}$ consists of $\boldsymbol{\pi} = ( \boldsymbol{\pi}_s, \, s \in \mathbb{N} )$ satisfying condition \eqref{Eq:pis-1}, see \S~\ref{S:M-Set}$)$. 
Then the set $F^{\boldsymbol{\pi}}$ 
is an $M$-set for the $d$-dim Walsh system 
under convergence over rectangles, 
cubes, or under iterated convergence for any order of summation.  
Moreover the $d$-multiple Walsh series $\sum\limits_{ \mn = \mathbf{0} }^\infty 
\tau^{\boldsymbol{\pi}}_{\mn} W_{\mn} ( \mg )$, 
generating the quasi-measure $\tau_{F^{\boldsymbol{\pi}}}$,  
is a null-series that realizing the $M$-set $F^{\boldsymbol{\pi}}$. 
\end{theorem}

\begin{proof}
The proof essentially repeats the previous one. However, 
we will go through it step-by-step, highlighting the common features while explaining why, 
in contrast to Lemmas~\ref{Lem:2-1}--\ref{Lem:2-3} and Remarks~\ref{Rem:2-1}--\ref{Rem:2-3}, 
the theorem is formulated specifically for sequences $\boldsymbol{\pi}$ 
of permutations from $S^{\prime}$ rather than from $S$.

The proof of Theorem~\ref{T:Main-1} can be roughly divided into three main stages. 

At the first stage, we established formula~\eqref{Eq:WSer-PS-3}, which remains valid in our case since it relies on Corollary~\ref{Cor:Main-4}. This corollary holds for all quasi-measures $\tau_F$ constructed in Section~\ref{Subsubs:Tau-F} for non-empty closed sets $F$, including in particular the sets $F^{\boldsymbol{\pi}}$.

The second stage consisted of proving the estimate \eqref{Eq:WSer-PS-5}, 
which remains unchanged if we replace the coefficients $\tau_{\mn}$ with 
$\tau^{\boldsymbol{\pi}}_{\mn}$.  
Indeed, in the new setting, neither $\#V$ nor $\max\limits_{\mathbf{n} \in B_{2m_s}} |\tau_{\mathfrak{n}}|$, which equals $2^{s-1-dm_s}$, will change (compare \eqref{Eq:Tau-Coeff-0} and \eqref{Eq:Tau-Coeff-16}). 

The third stage involved establishing upper bounds for the quantities $|I_1|$, $|I_2|$, and $|I_3|$ appearing in \eqref{Eq:BigSum}. 
The estimates for $|I_1|$ and $|I_3|$ transfer immediately to the new setting. 
Since the estimate for $|I_2|$ employs formula \eqref{Eq:I2}, where the coordinates of all vectors are split into two groups, in the new setting formula \eqref{Eq:I2} remains valid only when the permutations $\boldsymbol{\pi}_s$ act ``coordinate-wise'', i.e., satisfy \eqref{Eq:pis-1}.
Thus, under the restriction $\boldsymbol{\pi} \in S^{\prime}$ to the appropriate index subset, the estimate for $|I_2|$ remains valid in the modified setting.
   
From the above arguments, the statement of the theorem follows.
\end{proof}

\begin{theorem} 
\label{T:Main-2-2-2}
Let $F$ be a $M$-set for the $d$-dim Walsh system while 
$\sum\limits_{\mathbf{n}=0}^{\infty} \tau_{\mathbf{n}} W_{\mathbf{n}}$ be 
the $d$-multiple Walsh null-series 
constructed in Theorem~\ref{T:Main-1}. 
If $\sum\limits_{\mathbf{n}=0}^{\infty} \psi_{\mathbf{n}} W_{\mathbf{n}}$ is 
another $d$-multiple Walsh null-series 
converging to zero by rectangles or cubes outside the set $F$,  
and $\psi_{\mathbf{n}} = o(\tau_{\mathbf{n}})$ as $\max n_j \to \infty$, 
then all $\psi_{\mathbf{n}} = 0$.
\end{theorem}
\begin{proof}
Let $\psi$ be the quasi-measure 
generated by the series $\sum\limits_{\mn=0}^{\infty} \psi_\mn W_\mn $, 
$\psi_\mn = \widehat{\psi}_\mn$. 
If not all $\widehat{\psi}_{\mathbf{n}}$ are zero, 
then the quasi-measure $\psi$ is non-trivial 
and there exists a dyadic cube $\Delta^{(m_{s_0})}$ (of rank $m_{s_0}$) such that
$\psi ( \Delta^{ ( m_{s_0} ) } ) = C \ne 0$. 
Here $(m_s)$ is the increasing sequence of natural numbers introduced in Section~\ref{S:M-Set} for constructing the set $F$.

We claim that in can be found a dyadic cube  
$\Delta^{ ( m_{s_0+1} ) } \su \Delta^{ ( m_{s_0} ) }$ 
(of rank $m_{s_0+1}$), 
for which 
\begin{equation} 
\label{Eq:1111}  
| \psi ( \Delta^{ ( m_{s_0+1} ) } ) | 
\ge \dfrac{ 2 | C | }{ 2^{ -d ( m_{s_0+1} - m_{s_0} ) } }. 
\end{equation} 
Indeed, there exist $2^{d(m_{s_0+1} - m_{s_0})}$ 
dyadic cubes $I \subset \Delta^{(m_{s_0})}$ of rank $m_{s_0+1}$ and precisely half of them are disjoint from $F$, by the construction of this set, 
and thus $\psi(I) = 0$ for all cubes $I$ from this half.
If for all cubes $I$ from the other half 
the inequality
\[
| \psi ( I )| 
< 
\frac{ 2 | C | }{ 2^{ d ( m_{s_0+1} - m_{s_0} ) } }  
\] 
holds, then by the additivity of the quasi-measure $\tau$ 
we get the relation  
\[ 
| \psi ( \Delta^{ ( m_{s_0} ) } ) | 
< 
\dfrac{ 2^{d ( m_{s_0+1} - m_{s_0} )} }{2} \, 
\frac{ 2 | C | }{ 2^{ d ( m_{s_0+1} - m_{s_0} ) } } 
= 
C,   
\]
which leads to a contradiction with $\psi ( \Delta^{ ( m_{s_0} ) } ) = C$. 
Thus, the desired cube $\Delta^{(m_{s_0+1})}$ exists. 

Iterating this process, 
we inductively  construct a nested sequence of dyadic cubes
$\Delta^{ ( m_{s_0} ) } \supset \Delta^{ ( m_{s_0+1} ) } \supset \ldots \Delta^{ ( m_{s_0+1} ) } \supset \ldots$ 
such that the  rank $\Delta^{ ( m_{s_0+1} ) }$ equals $m_{s_0+k}$ and  
\begin{equation} 
\label{Eq:11111}
| \psi ( \Delta^{ ( m_{s_0+k} ) } ) | 
\ge 
\frac{ 2^k | C | }{2^{ d ( m_{s_0 + k} - m_{s_0} ) }}. 
\end{equation}

Now fix a natural number $k$, 
set $s := s_0 + k$, and estimate 
$\left| \psi \left( \Delta^{(m_s)} \right) \right|$ 
from above.
Observe that $\psi ( \Delta ) = 0$ holds for all dyadic cubes $\Delta \subset \mG^d \setminus F$, since the series $\sum\limits_{\mn=0}^{\infty} \psi_\mn W_\mn$ 
converges to zero over cubes outside $F$ (see item~\ref{Subsub:F-Tau-05}).
In view of this fact, we derive
\begin{equation} 
\label{Eq:12} 
\begin{split}
\psi ( \Delta^{(m_s)} )  
& 
= 
\sum\limits_{\Delta^{(2m_s+1)} \subset \widetilde{F}_s \cap \Delta^{(m_s)}} 
\psi( \Delta )   
\\ 
& 
\stackrel{\eqref{Eq:Canon-Iso}}{=} 
\sum\limits_{\Delta^{(2m_s+1)} \subset \widetilde{F}_s \cap \Delta^{(m_s)}}
\sum_{ \mn < 2^{2m_s+1} \mathbf{1} } 
\int\limits_{\Delta} \psi_{\mn} W_{\mn} \, d \mu, 
\\ 
& 
= 
\sum_{ \mn < 2^{2m_s+1} \mathbf{1} } 
\psi_{\mn} 
\int\limits_{ F_s \cap \Delta^{(m_s)} } W_{\mn} \, d \mu.  
\end{split}
\end{equation} 

The integral on the right-hand side of \eqref{Eq:12} 
is $2^{-s} \widehat{\tau}_{\mn} ( \Delta^{(m_s)} )$ 
and vanishes 
if the vector $\mathbf{n}$ satisfies condition of Lemma~\ref{Lem:2-2}
(we use Lemmas~\ref{Lem:2-2} и \ref{Lem:2-3}). 
Therefore, the sum on the right-hand side of \eqref{Eq:12} 
can be written as
$= \sum_{ \mn < 2^{m_s} \mathbf{1} } 
+ \sum_{ 2^{m_s} \mathbf{1} \le \mn < 2^{2m_s+1} \mathbf{1} }$. 
Next, 
if $\mn < 2^{m_s} \mathbf{1}$, 
then the Walsh function  $W_{\mn}$ is constant on $\Delta^{(m_s)}$  
and $\mu ( F_s \cap \Delta^{(m_s)} ) = \mu ( \Delta^{(m_s)} )/ 2$, 
which yields
\[ 
\int\limits_{ F_s \cap \Delta^{(m_s)} } W_{\mn} \, d \mu
= 
\dfrac{1}{2}
\int\limits_{ \Delta^{(m_s)} } W_{\mn} \, d \mu. 
\]

Taking into account the preceding arguments, we obtain  
\[ 
\begin{split}
\left| \psi ( \Delta^{(m_s)} ) \right| 
& 
\le 
\bigg| 
    \sum_{ \mn < 2^{m_s} \mathbf{1} } 
    \frac{1}{2} 
    \int\limits_{\Delta^{(m_s)}} 
    \psi_{\mn} W_{\mn} (t) \, d \mu
\bigg| 
+ 
\bigg| 
    2^{-s} 
    \sum_{ 2^{2m_s} \mathbf{1} \le \mn < 2^{2m_s+1} \mathbf{1} } 
    \psi_{\mn} 
    \widehat{\tau}_{\mn} ( \Delta^{(m_s)} ) 
\bigg| 
\\
& 
\le 
\frac{1}{2} \left| \psi ( \Delta^{(m_s)} ) \right| 
+ 
2^{-s}
\max_{ \mn \in B_{2m_s} } 
| \psi_\mn | 
\sum_{ 2^{2m_s} \mathbf{1} \le \mn < 2^{2m_s+1} \mathbf{1} } 
| \widehat{\tau}_{\mn} ( \Delta^{(m_s)} ) |
\\ 
& 
= 
\frac{1}{2} \left| \psi ( \Delta^{(m_s)} ) \right| 
+ 
2^{-s}
\max_{ \mn \in B_{2m_s} } 
| \psi_\mn | 
\,
2^{d m_s}
2^{s - 1 - d m_s}.  
\end{split}
\]  
In the last step, we used the fact that 
if $2^{2m_s} \mathbf{1} \le \mn < 2^{2m_s+1} \mathbf{1}$, 
then $\mn$  has the form  \eqref{Eq:2} 
and, according to \eqref{Eq:Tau-Coeff-000}, 
the quantity
$| \widehat{\tau}_{\mn} ( \Delta^{(m_s)} ) |$ 
equals $2^{s - 1 - d m_s}$ 
for exactly one $\mmp$, 
i.e., for $2^{d m_s}$ vectors  $\mn$,  
while vanishing for all others. 
From the last chain of inequalities (recalling that $s := s_0 + k$), we get  
\[ 
\begin{split}
\max_{\mn \in B_{ 2m_s } } | \psi_\mn | 
\ge  
\big| \psi ( \Delta^{(m_s)} ) \big|  
& 
\stackrel{\eqref{Eq:11111}}{\ge}  
2^{s-s_0-d m_s +d m_{s_0}} | C | 
\\
&
= 
2^{s - 1 - d m_s} 
2^{1-s_0+ d m_{s_0}} | C | 
= 
2^{1-s_0+ d m_{s_0}} | C | 
\max_{\mn \in B_{ 2m_s } } | \tau_\mn |.  
\end{split}
\]
It follows that $\psi_{\mn}$ is not $o(\tau_{\mn})$ 
as $\max n^j \to \infty$. This contradicts the theorem's assumption 
and completes the proof.
\end{proof}


The following theorem~\ref{T:U-1} is an analogue of Theorem~2 from~\cite{plotnikov-2010}.


\begin{theorem} 
\label{T:U-1}
Let a sequence $( \mathbf{n}_i \in \mbN^d, \, i \in \mbN )$,  
a $d$-dim Walsh series $(S)$, and $\tau$ be the quasi-measure generated by the series $(S)$ be given.  
Suppose that each $\mathbf{n}_i$ lies in some binary block $B_{k_i}$,  
with $\lim\limits_{i \to \infty} k(i) = \infty$,  
and at least one of the following conditions is satisfied. 

$(\mathrm{a})$ 
The series $(S)$ $2$-converges at least at one point of the group $\mathbb{G}^d$.

$(\mathrm{b})$ 
$\mathbf{n}_i = n_i \mathbf{1}$ for all $i$ and the series $(S)$ is $\lambda$-convergent 
for at least one $\lambda > 1$ at at least one point of the group $\mathbb{G}^d$.
 
Then 
\begin{equation} 
\label{Eq:U-00}
\lim\limits_{i \to \infty} 
\int\limits_{\Delta} W_{\mathbf{n}_i} d \tau = 0 
\end{equation}
for all dyadic cube $\Delta$. 
\end{theorem}

\begin{proof} 
Let us take and fix an arbitrary dyadic cube $\Delta$, and let $q$ be its rank.
$\Delta$ is a certain coset $\mathbf{g}_0 \oplus \Delta^{(q)}_{\mathbf{0}}$, 
$\mathbf{g}_0 \in \mathbb{G}^d$.  
If $\mathbf{n}i \in B{k_i}$ and $k_i \ge q$, then the integral from \eqref{Eq:U-00} can be expressed in terms of the coefficients $a_{\mathbf{n}}$ of the series $(S)$ using the formula
\begin{equation} 
\label{Eq:U-0001}
\int\limits_{\Delta} W_{\mathbf{n}_i} d \tau 
= 
2^{-qd} 
\sum\limits_{\mathbf{n} < 2^q \mathbf{1}} 
a_{ \mathbf{n}_i \oplus \mathbf{n} } 
W_{ \mathbf{n} } ( \mathbf{g}_0 ).  
\end{equation} 
Formula \eqref{Eq:U-0001} is essentially derived in the proof of Theorem 5.1 from paper \cite{MP-EMJ-2019}, 
which studied systems of characters of a zero-dimensional compact Abelian group, a special case of which is the Walsh system.
Here, $\oplus$ denotes dyadic addition of vectors from $( \mathbb{N}_0 )^d$, which is defined as
\[ 
\mathbf{n} \oplus \mathbf{m} 
= 
( n^1 \oplus m^1, \ldots, n^d \oplus m^d ), 
\qquad 
\mathbf{n}, \, \mathbf{m} 
\in ( \mathbb{N}_0 )^d, 
\] 
and for $d=1$, for nonnegative integers $n$ and $m$ with binary coefficients $n_k$ and $m_k$, 
the operation $\oplus$ is bitwise addition modulo 2:   
$n \oplus m := \sum_{k=0}^\infty | n_k - m_k | 2^k$.

Assume that condition (a) of the theorem holds. 
Then the coefficients of the series $(S)$ with indexes from the binary blocks tend to zero in the following sense:
\begin{equation} 
\label{Eq:U-0002}
\lim\limits_{k \to \infty} 
\sup\limits_{\mathbf{m} \in B_k} 
| a_{ \mathbf{m} } | 
= 0  
\end{equation} 
(see, e.g., \cite{plotnikov-2017, MP-EMJ-2019}).  
If $\mathbf{n}_i \in B_{k_i}$ where $k_i \ge q$, and $\mathbf{n} < 2^q \mathbf{1}$, 
then $\mathbf{n}_i \oplus \mathbf{n} \in B_{k_i}$.   
Therefore, it follows from formula \eqref{Eq:U-0002} 
that the right-hand side of \eqref{Eq:U-0001} tends to zero as $i \to \infty$.  
We prove equality \eqref{Eq:U-00} in this case.

Now suppose condition (b) of the theorem holds. 
From the $\lambda$-convergence of the series $(S)$ at some point $\mathbf{g} \in \mathbb{G}^d$ for $\lambda > 1$ 
and the obvious formula
\[ 
a_{\mathbf{m}} W_{\mathbf{m}} (\mathbf{g})
= 
\sum\limits_{ \boldsymbol{\sigma} \in \{ 0,1\}^d } 
(-1)^{| \sigma^1 + \ldots + \sigma^d | }
S_{\mathbf{m} + \mathbf{1} - \boldsymbol{\sigma} } (\mathbf{g}),    
\]
for any $\rho \in (1, \lambda )$, the relation  
\begin{equation} 
\label{Eq:U-0003}
\lim a_{\mathbf{m}} = 0 
\quad 
\text{as $\max\limits_j \{ m^j \} \to \infty$ 
and $\mathbf{m} \in M_{\rho} := \big\{ \mathbf{m} \colon \max_{j,k} \{ m^j / m^k \} \le \rho \big\}$}  
\end{equation} 
holds.
Since $\mathbf{n}_i = n_i \mathbf{1}$, for large $i$ all indices $\mathbf{n}_i \oplus \mathbf{n}$ with $\mathbf{n} < 2^q \mathbf{1}$ lie in $M_{\rho}$. Therefore, it follows 
from \eqref{Eq:U-0003} that the right-hand side of \eqref{Eq:U-0001} tends to zero as $i \to \infty$, 
which proves \eqref{Eq:U-00} in this case as well.
\end{proof}


\begin{theorem}
\label{T:U-2}
Assume the sequence $( \mathbf{n}_i \in \mbN^d, \, i \in \mbN )$ satisfies  
the conditions of Theorem ~\ref{T:U-1}. 
Then the set
\begin{equation} 
\label{Eq:U-05} 
F := \{ \mathbf{g} \in \mathbb{G}^d \colon W_{ \mathbf{n}_i} ( \mathbf{g} ) = 1 
\;\; \text{for all $i$} \} 
\end{equation}
is $U$-set for the $d$-dim Walsh system under $2$-convergence 
and if, additionally, $\mathbf{n}_i = n_i \mathbf{1}$, then also under $\lambda$-convergence with any $\lambda > 1$.
\end{theorem}

\begin{proof} 
Suppose there exists a nontrivial series \eqref{Eq:WSer} that is $2$-convergent to zero outside $F$. 
Then, by Theorem~\ref{T:U-1}, the quasi-measure $\tau$ generated by this series satisfies condition \eqref{Eq:U-00}.

On the other hand, the quasi-measure $\tau$ is not identically zero (just like the original series), 
hence there exists a dyadic cube $\Delta$ such that  $\tau(\Delta) \ne 0$. 
Let $q$ be its rank and $k_i > q$.
We obtain
\begin{equation} 
\label{Eq:U-06} 
\begin{split} 
\int\limits_{\Delta} 
W_{ \mathbf{n}_i } d \tau 
& 
= 
\sum_{ \Delta^{(k_j)} \subset \Delta } 
\int\limits_{ \Delta^{(k_j)} } 
W_{ \mathbf{n}_i } d \tau 
= 
\sum_{ ( \Delta^{(k_j)} \subset \Delta ) \; \wedge \; ( \Delta^{(k_j)} \cap F \neq \emptyset )} 
\int\limits_{ \Delta^{(k_j)} } 
W_{ \mathbf{n}_i } d \tau  
\\ 
& 
\stackrel{\eqref{Eq:U-05}}{=} 
\sum_{ ( \Delta^{(k_j)} \subset \Delta ) \; \wedge \; ( \Delta^{(k_j)} \cap F \neq \emptyset ) } 
\tau ( \Delta^{(k_j)} ) 
\\ 
& 
= 
\tau(\Delta) 
- 
\sum_{ (\Delta^{(k_j)} \subset \Delta ) \; \wedge \; ( \Delta^{(k_j)} \cap F = \emptyset ) } 
\tau ( \Delta^{(k_j)} ) 
\stackrel{ \text{п.~\ref{Subsub:F-Tau-05}} }{=} 
\tau(\Delta) \neq 0. 
\end{split}
\end{equation}
\eqref{Eq:U-06} yields a contradiction with \eqref{Eq:U-00}, which proves the main part of the theorem's statement.

If, additionally, $\mathbf{n}_i = n_i \mathbf{1}$, then, repeating the reasoning above with any $\lambda > 1$, we again arrive at a contradiction with formula \eqref{Eq:U-00}, which also holds in this case according to Theorem~\ref{T:U-1}. 
The theorem is proved.
\end{proof} 

The sets \eqref{Eq:U-05} are the $d$-dim version of the Dirichlet sets for the Walsh system. 
In the univariate case, such sets were studied in uniqueness theory by K.~Yoneda~\cite{yoneda-1982, yoneda-1984}, 
who established in particular~\cite{yoneda-1982} that all one-dim Dirichlet sets are $U$-sets for the Walsh system.


\begin{cor} 
\label{Cor:U-2}
Let $(m_s)$ be the sequence from Section \S~\ref{S:M-Set},  
and let $F_s$ denote the union of ``graphs'' of the same $d$-dim Walsh function  
with index $\mathbf{n}$ satisfying  
$2^{m_s} \mathbf{1} \leq \mathbf{n} < 2^{m_s+1} \mathbf{1}$,  
compressed to the dyadic cubes of rank $m_s$.
Then the set $F=\cap_{s=1}^{\infty}F_s$ is a $U$-set
for the $d$-dim Walsh system for $2$-convergence and if $\mathbf{n} = n \mathbf{1}$, 
where $2^{m_s} \le n \le 2^{m_{s}+1} - 1$, then also for $\lambda$-convergence with any $\lambda > 1$.
\end{cor} 

\begin{proof}
The set $F$ can be expressed as  
$F = \bigcap\limits_{s=1}^\infty F_s$,  
where each $F_s$ comprises all $\mathbf{g} \in \mathbb{G}^d$  
satisfying $W_{\mathbf{n}_s}(\mathbf{g}) = 1$,  
with indices $\mathbf{n}_s = 2^{2m_s} \mathbf{1} + 2^{m_s} \mathbf{n}$.  
It is straightforward to verify that  
the sequence $(\mathbf{n}_s)$ satisfies  
the conditions of Theorem~\ref{T:U-1}.
It remains to apply Theorem~\ref{T:U-2}.  
\end{proof}


\begin{theorem} 
\label{T:Portion-2} 
Every non-empty set of the form $G \cap F^{\boldsymbol{\pi}} \neq \emptyset$, 
where $\boldsymbol{\pi} \in S^{\prime}$, and $G \subset \mathbb{G}^d$ is open, 
is an $M$-set for the $d$-dimensional Walsh system under rectangular convergence, 
cubic convergence, or iterated one with any order of summation. 
Moreover, there exists a dyadic cube $\Delta^{(m_{s_0})} \subset G$ 
such that the $d$-dimensional Walsh series \eqref{Eq:WSer}, generating the quasi-measure $\tau_{F^{\boldsymbol{\pi}}}|_{\Delta^{(m_{s_0})}}$, realizes the $M$-set 
$G \cap F^{\boldsymbol{\pi}}$.
\end{theorem}

\begin{proof}  
Since any open set $G \subset \mathbb{G}^d$ is at most a countable union of dyadic cubes, there exists a dyadic cube $\Delta^{(m_{s_0})} \subset G$ such that $\Delta^{(m_{s_0})} \cap F^{\boldsymbol{\pi}} \neq \emptyset$.

The rest of the proof follows the same lines as the proofs of Theorems ~\ref{T:Main-1} and \ref{T:Main-1-1}.
Thus, for points $\mathbf{g} \notin \Delta^{(m_{s_0})} \cap F^{\boldsymbol{\pi}}$, 
formula~\eqref{Eq:WSer-PS-3} remains valid, since even with the new settings 
we are operating under the conditions of Corollary~\ref{Cor:Main-4}. 
Furthermore, estimate \eqref{Eq:WSer-PS-5} remains valid because the Fourier coefficients of the restricted quasi-measure $\tau_{F^{\boldsymbol{\pi}}}|_{\Delta^{(m_{s_0})}}$ are majorized by (Remark~\ref{Rem:2-3}) the Fourier coefficients of the quasi-measure $\tau_{F^{\boldsymbol{\pi}}}$.
Finally, the estimates for the analogues of the quantities $|I_1|$, $|I_2|$, and $|I_3|$ from \eqref{Eq:BigSum} clearly carry over to the new setting (see the reasoning in the proof of Theorem~\ref{T:Main-1-1}). 
The theorem's statement follows from the preceding arguments.
\end{proof}


\end{document}